\newtheorem{thm}{Theorem}[section]
\newtheorem{cor}[thm]{Corollary}
\newtheorem{lem}[thm]{Lemma}
\newtheorem{prop}[thm]{Proposition}
\newtheorem{con}[thm]{Conjecture}
\theoremstyle{definition}
\theoremstyle{definition}
\newtheorem{defi}[thm]{Definition}
\newtheorem{rem}[thm]{Remark}
\title[The unconstrained 100 prisoner problem]{On partial information retrieval: \\the unconstrained 100 prisoner problem}
\author{Ivano Lodato}
\email{ivano.lodato@gmail.com}
\address{Allos Limited, Hong Kong}
\author{Snehal M. Shekatkar}
\email{snehal@inferred.in}
\address{Department  of Scientific Computing, Modeling, and Simulation, Savitribai Phule Pune University, Pune 411007, India}
\author{Tian An Wong}
\email{tiananw@umich.edu}
\address{Department of Mathematics and Statistics, University of Michigan, Dearborn,  MI 48126 USA}
\subjclass[2010]{ 68R05, 05A05}
\keywords{100 prisoner problem, discordant permutations, information retrieval, memory}
\begin{document}

\begin{abstract}
We consider a generalization of the classical 100 Prisoner problem and its variant, involving empty boxes, whereby winning probabilities for a team depend on the number of attempts, as well as on the number of winners. We call this the unconstrained 100 prisoner problem. After introducing the 3 main classes of strategies, we define a variety of `hybrid' strategies and quantify their winning-efficiency. Whenever analytic results are not available, we make use of Monte Carlo simulations to estimate with high accuracy the  winning-probabilities. Based on the results obtained, we conjecture that \emph{all} strategies, except for the strategy maximizing the winning probability of the classical (constrained) problem, converge to the random strategy under weak conditions on the number of players or empty boxes. We conclude by commenting on the possible applications of our results in understanding processes of information retrieval, such as ``memory'' in living organisms. 
 
\end{abstract}

\maketitle

%%%%%%%%%%%%%%%%%%%%%%%%%%%%%%%%%%%%%%%%
\section{Introduction}
%\setcounter{equation}{0}
%%%%%%%%%%%%%%%%%%%%%%%%%%%%%%%%%%%%%%%%

The aim of this paper is to study, by analytical and computational means, mathematical models of information retrieval.
Our starting point is a generalization of the problem originally proposed by G\'al and Miltersen \cite{GM} in the context of data structures:
\begin{quote}
Consider a team of $n$ prisoners $P_1,\dots,P_n$, $n$ keys labeled $1,\dots,n$, distributed randomly in $N \ge n$ boxes so that each box contains \emph{at most} one key. Each player $P_i$ is allowed to open $a\le N$ boxes to find the key $i$. The players cannot communicate after the game starts and the team wins if \emph{at least} $w \le n$ players find their key.
\end{quote}
We dub this problem as the \emph{unconstrained 100 prisoners problem} or the prisoners-search-game (PSG). The original motivation for the problem arose in computer science, concerning the trade-off between space (intended as storage space) and time (needed to perform a given task) for substring search algorithms. 
In colloquial terms, it asks what are the most efficient schemes to retrieve information encoded in data structures in which the information has been randomly stored.
While it was clearly important in that context that the information be retrieved completely, i.e. the team wins if all players find their keys, this puzzle has an obvious generalization whereby the team wins if a number $w\le n$ players find their keys in $a$ attempts. 
The general analysis of strategies for partial information retrieval after random and constrained storage, treated in this paper constitutes a necessary step towards a precise description of complex memory processes in living organism as well-defined PSGs. %We will comment further in the conclusion.

We consider a variety of strategies, each determines a family of probability distributions $P_S(a, w)$ as the maximum number of attempts $a$ is varied for a fixed strategy $S$. This family can be thought of as a function of $a$ and $w$ which we will henceforth call a \emph{P-function}: it gives the probability that following the strategy $S$, exactly $w$ players win within $a$ attempts.\par
 We first analyze three main classes of strategies in the case $N=n$, i.e., when there are no empty boxes: the random strategy, the key strategy (also called the \emph{pointer-following} strategy), and the box strategy in which players open the boxes in an arithmetic progression. 
We observe, both theoretically and experimentally, that for large $N$, the box strategy approximates the random strategy whose P-function is given by the binomial distribution in \eqref{eq:random_exact}. Furthermore, while the key-strategy remains \emph{on average} the best strategy confirming the results of the constrained classical problem, its minimum-winner P-function is actually smaller, in certain parts of the $a$-$w$ domain, than that of the random strategy!\par 
We also consider a variety of hybrid strategies for $n=N$ and $n<N$, obtained combining these three main strategies (box-,key-,random-) in various ways. As analytical methods become increasingly difficult in this setting, we turn to numerical experiments with Monte Carlo methods and quantify each strategy in absolute and relative terms, \eqref{efficiency} and \eqref{err} respectively. 
Using these tools, we compare the hybrid strategies we created with those of Avis-Devroye-Iwama (ADI) \cite{Avi} and Goyal-Saks (GS) \cite{GS}, both key-based algorithms. We experimentally verify that some of the former strategies are more \emph{efficient} (more winners with less attempts) than the latter.
Finally, based on our simulations, we formulate in Conjecture \ref{conj} the expectation that all properly bounded strategies, except the original key-strategy, will approximate the random strategy whenever $N$ grows large, or as $n/ N$ tends to zero. 
\subsection{Outline of the paper} The paper is organized as follows: in section \ref{sec:2}, we introduce the general set-up of the problem, comment on the constraints that can be imposed on the prisoners' choices, and present some important definitions; in section \ref{sec:3} we present and discuss the three most general classes of strategies logically allowed to solve the classical PSG and obtain, analytically whenever possible, the winning probabilities of the random, key, and box strategies. Generalized hybrid strategies, possible for $n=N$ and necessary when empty boxes are present, i.e. $n<N$, will be analyzed in section \ref{sec:4} along with implementations of the ADI \cite{Avi} and GS \cite{GS} algorithms. We then present a summary of all our results in section \ref{sec:5}, formulate Conjecture \ref{conj}, and finally discuss in section \ref{sec:6}.
possible applications to mnemonic processes of information-retrieval in living organism endowed with memory. 
We also include a set of appendices where we present some more details. 

\subsection{}

All the codes used in this work are freely available as a part of a Python package \texttt{prisoners-search-game}\cite{psg_package}.
\section{Set-up}
\label{sec:2}
\subsection{Definitions}
\label{subsec:2.1}

The problem we shall consider in this paper is a simple, yet broad generalization of the classical PSG  and can be stated as follows:
\begin{quote}
%What is the optimal strategy for $w\le n$ prisoners finding their key by opening $a\le N$ boxes?
What is the optimal strategy for (at least) $w\le n$ prisoners to find their key by opening $a\le N$ boxes, knowing that the keys have been distributed uniformly randomly inside the boxes and each box can contain at most one key?
\end{quote}
In this paper, we assume there are at least as many boxes as players, $n\le N$. We will mostly consider the case $N=n$, leaving the variant $n<N$ for the second part of section \ref{sec:4}.  Note that if $w<n$, a distinction must be made between the probabilities for an \emph{exact} and a \emph{minimum} number of winners in $a$ attempts (P-functions), indicated by $P(a,w)$ and $P^{\rm min}(a,w)$ respectively.  It is easy to see also that $P^{\rm min}(a,N)=P(a,N)$, since $N$ is the upper bound on the number of winners. %\textcolor{red}{where is this used}

There are in principle infinitely many strategies to approach the problem for $N$ prisoners, each of which has $0<a \le N$ attempts  and whereas the group wins if (at least) $0<w\le N$ prisoner find their key. Let $b_{i,j}$ be the box opened by prisoner $i$ at attempt $j$ and $k_j$ the key found by the prisoner in box $j$. Any strategy must follow two algorithmic steps:
\begin{enumerate}
\item[S1.] Choose the first box to open: prisoner $P_i$ decides an offset ${D}\in \mathbb Z$ from the box $i$, and opens first the box $(i+{D})\bmod N$. There are two possible choices for the offset,
\begin{itemize}
\item ${D}=0$, each prisoner $P_i$ opens box $b_{i,1} = i$ first,%, which we denote by $b^1_i$
\item ${D} = D_i$, each prisoner $P_i$ opens box $b_{i,1}=(i+D_i) \bmod N$ and $D_i\neq0$ for at least one $i$. 
\end{itemize}
\item[S2.]Choose the remaining $j=2,\dots,a$ boxes, in search for the key. To do so we consider three possibilities, which can all be expressed in terms of a nonzero increment ${I}_{i,j}$, such that $b_{i,j+1}\equiv (b_{i,j}+{I}_{i,j}) \bmod N$ with: %\textcolor{red}{what is $j$}
\begin{enumerate}
\item[Key strategy]: ${I}_{i,j}=(k_{j}-b_{i,j})$, the $(j+1)^{\rm th}$ box prisoner $i$ opens is decided by the number on the key found at the $j^{\rm th}$ box opened, i.e. the prisoner opens the box suggested by the number of the key found in the previous box.
\item[Box strategy]: ${I}_{i,j}=d$: the $(j+1)^{\rm th}$ box each prisoner opens is decided by constant shift $d$ of the box opened at the $j^{\rm th}$ attempt.\footnote{This assumption can be extended, so that the $(j+1)^{\rm th}$ box opened may depend on more than just the previous opened box, but an some/all previously opened boxes. As we shall comment below, relaxing this hypothesis will not modify the P-functions.} 
\item[Random strategy]: ${I}_{i,j}=d_{i,j}$: the $(j+1)^{\rm th}$ box each prisoner opens is decided by a non-constant shift $d_{i,j}$, which depends on the prisoner and the attempt.\footnote{There exist two middle cases, whereby the increment does (resp. does not) depend on the prisoner but it does not (resp. does) depend on the attempt. However, these are automatically accounted for by the most general case ${I}=d_{i,j}$.}
\end{enumerate}
\end{enumerate}
Here henceforth we will use the acronyms KS, BS and RS to indicate the above 3 strategies, respectively. We shall also use the superscript 0, as in KS$^0$ to indicate that the offset ${D}=0$ for the key strategy. 
Note that the case KS$^0$ represents the optimal solution to the classical 100 prisoner problem. 

Any strategy $S$, in principle, is a function of the players $P = \{P_1,\dots, P_n\}$, boxes $B=\{b_1,\dots,b_N\}$, and the keys $K=\{k_1, \dots , k_N \}$ found inside the boxes, with $B, K \subset \{1,2,\dots , N\}$. So generally speaking, we have a map \textcolor{red}{when $D= 0$}, 
\[
S: P\times B\times K \to B.
\]
The three possibilities for $S2$ discussed above, combined with a (specific) choice of $D$ for $S1$, give rise to three natural classes of strategies, which we analyze in the next section. We will be interested mainly in strategies such that no prisoner revisits a previously opened box, regardless of choices of $S1$ and $S2$. This reasonable assumption will often constrain the offset $D$ and/or increment ${I}$ to take on only specific values. It is then helpful to give the following:

\begin{defi}
Let $S$ be a strategy. We say:
\begin{enumerate}
\item
$S$ is \emph{properly bounded} if all $n$ prisoners will necessarily find their key for all $a\ge N$.
\item
$S$ is \emph{bounded} if all prisoners will necessarily find their key for all $a \ge M$, for some integer $M>N$.
\item
$S$ is \emph{unbounded} if not all prisoners necessarily find their key for any (finite) $a$.
\end{enumerate}

\end{defi}
We see that $S$ is not properly bounded if and only if players can open the same box more than once. Hence properly bounded strategies force the increment ${I}$ (or the offset ${D}$) to be such that no box is opened more than once. It follows by definition then that a properly bounded strategy $S$ has higher probability of success for an \emph{individual player} than any of its unbounded variants $S^\prime$, 
\[
P^i_S\ge P^i_{S^\prime},
\]
where $P^i$ specifies the probability of success of the player $P_i$.  On the other hand, if we are interested in the probability of success for a group of \emph{exactly} $w$ players this inequality does not hold between the P-function $P_S(a,w)$ and $P_{S^\prime}(a,w)$ in the whole $a$-$w$ plane. The reason is that both these P-functions reach maxima (minima) not only when the probability for $w$ players to win is maximized (minimized), but also when the probability for $N-w$ players to lose are maximized (minimized) (or equivalently, the probability for $N-w$ players to win are minimized (maximized)). However, by considering the P-functions for the group of \emph{at least} $w$ players to win, the remaining $N-w$ players are not required to lose. Consequently, we recover an inequality between the P-functions for a minimum number of winners:
\[
P^\text{min}_S \ge P^\text{min}_{S^\prime}.
\]
In appendix \ref{ref:appA} we present an unbounded variant of the random strategy analysed in section \ref{subsec:3.1} and show the correctedness of the above inequalities.

Aside from increasing the probability for individual players to win, there is another reason why the properly-bounded property for strategies is important, though the above distinction has been somehow hidden in previous approaches to this problem which fixed $w=N$. The reason is immediately apparent through the below lemma.  Note that the symmetry assumption turns out to be true for  both random and box-strategies, as we will show later in section \ref{sec:3}, equation \eqref{eq:symmetry} and appendix \ref{AppB}, Lemma \ref{eq:symm_box}.

\begin{lem}
If $P(a,w)=P(N-a,N-w)$, then the probability of success for \emph{at least} $w$ prisoners satisfies the identity
\[
P^{\rm min}(a,w)+P^{\rm min}(N-a,N-w+1)=1
\]
\end{lem}
\begin{proof}
Consider the left hand side of the above identity, and write it down explicitly in terms of the exact probabilities, hence
\begin{align*}
& \sum_{i=0}^{N-w} P(a,w+i)+\sum_{i=0}^{w-1} + P(N-a,N-i)
\\
&=\sum_{i=w}^{N} P(a,i)+\sum_{i=0}^{w-1}  P(a,i)
\\
&=\sum_{i=0}^N P(a,i)=1
\end{align*} 
where we have used the symmetry assumption to rewrite the second summation in the second line. The proof can also be followed in the opposite direction.
\end{proof}

\noindent In particular, under the properly-bounded constraint, if the exact probability function possesses the diagonal symmetry above: the probabilities $P^{\rm min}$ of half the $a$-$w$ plane determines the probability for the other half of the plane.

\subsection{Monte Carlo sampling}
Our aim in this paper is to explore the behaviour of $P(a,w)$ over a range of $a$ and $w$. We intend to study the problem for large $N$ and since analytic results are not available, we would like to compute the probabilities numerically by accounting for all possible permutations of the keys in the boxes. However, in this case it is difficult to obtain exact winning probability of a given strategy, since the total number of possible permutations of a set of $N$ numbers is $N!$.\par
To approach this problem rigorously, we will make use of Monte Carlo simulations to  describe a variety of different strategies.
The basic idea of Monte Carlo methods is to sample the underlying space of configurations (too large to be considered fully) and then work with these sampled points, or simply ``sample.'' Provided that the sample is drawn uniformly randomly, its properties must approximate the properties of the original space more and more as we increase the size of the sample (i.e. as we draw more and more permutations).\par 
To apply the Monte Carlo technique to our problem, for a fixed value of $N$, $n$, and $a$, we draw a fixed sample of permutations of $\{1,2,\cdots,N\}$ uniformly randomly from the set of all $N!$ permutations, and then simulate the game for each of those permutations with a given strategy. This allows us to estimate the P-function $P(a, w)$ to a great accuracy provided that an enough number of samples are drawn. At this point we assume the conventional $95\%$ confidence level on the probability $P$, which implies a \emph{z-score} $z_{95}\sim 1.96$. Furthermore we take $\sigma^2= p(1-p)$ to be the variance for the (possible) future samples of permutations, where $p$ is the percentage of permutations in a specific sample satisfying the condition ``exactly $w$ prisoners find their key within $a$ attempts''. We can then compute the margin of error $\mathcal{M}$ as:
\begin{equation}
\mathcal{M}=z_{95} \sqrt{\frac{p(1-p)}{s}}
\end{equation}
with $s$ sample size. It is easy to show that, under the above condition, by fixing $s\sim 10^4$, and because $0\le p \le 1$, the margin of error will be always $\mathcal{M} \le 1\%$. Hence, we will consider sample of size $s=10^4$ to be sure of the accuracy, within small margin of errors of our Monte Carlo simulations.
We remind that all the codes used in this paper are freely available as a part of a Python package \texttt{prisoners-search-game}\cite{psg_package}.

\section{Three main strategies}
\label{sec:3}
We now present the most general classes of properly bounded strategies discussed above. We start by considering the strategy based on random choices, for which the exact winning probabilities can be easily derived. This will be considered the benchmark we will compare all other results against. We then continue by  considering the key- and box-strategies.
\subsection{Random Strategy RS}
\label{subsec:3.1}
The random strategy is the benchmark of all properly bounded strategies.
For all $a$ attempts to be random, the offset $D$ must be random for every prisoner. The algorithmic steps are: 
\begin{enumerate}{\tt 
\item Player $B_i$ opens a random box from $1$ to $N$, say $j$.
\item If Box $j$ does not contain the key $i$, open another box randomly among the boxes not opened so far.
\item Repeat until the key $i$ is found OR stop after $a$ attempts.}
\end{enumerate}
The analytic formula for the probability of \emph{exactly} $w$ players finding their key after $a$ attempts each is simply given by
\begin{equation}
\label{eq:random_exact}
P_{\rm RS}(a,w)={N\choose w}\Big(\frac{a}{N}\Big)^w \, \Big(\frac{N-a}{N}\Big)^{N-w} 
\end{equation}
where ${N\choose w}$ is the binomial coefficient which computes the number of winning $w$-uples that can be formed from $N$ elements, the second factor gives the probability for $w$ prisoners to find their key after $a$ attempts, and the last factor gives the probability of $N-w$ prisoners to not find their key in $a$ attempts. 

We further note that the above probability \eqref{eq:random_exact} is symmetric under exchange $(a,w) \rightarrow (N-a,N-w)$, the property of reflection with respect to the center point $(\frac{N}{2},\frac{N}{2})$,
\begin{equation}
\label{eq:symmetry}
P_{\rm RS}(a,w)=P_{\rm RS}(N-a,N-w)\;,
\end{equation}
as can be shown using the symmetry of the binomial coefficient. Note that because of the above identity, the probability for half the $a$-$w$ plane determines the probability for the other half of the plane (see figures \ref{fig:random_main}).

\begin{prop}
\label{randprob}
The random strategy for $a$ attempts and $w$ winners has $P^{\rm min}$-function 
\begin{equation}
\label{eq:random_min}
P^{\rm min}_{\rm RS}(a,w)=\sum_{w^\prime=w}^{N} P(a,w^\prime)=\frac{1}{N^N}\sum_{w^\prime=w}^N {N\choose w^\prime}\,a^{w^\prime}\,(N-a)^{N-w^\prime}.
\end{equation}
\end{prop}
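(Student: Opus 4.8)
The plan is to derive the exact formula \eqref{eq:random_exact} for $P_{\rm RS}(a,w)$ first and then pass to the minimum P-function by summing, since Proposition \ref{randprob} is essentially a bookkeeping consequence of the definition $P^{\rm min}(a,w) = \sum_{w'=w}^N P(a,w')$ once the exact distribution is in hand. First I would argue that, under the random strategy, the events ``prisoner $P_i$ finds key $i$ within $a$ attempts'' are independent across $i$ and each has probability $a/N$. The probability $a/N$ is immediate: prisoner $P_i$ opens $a$ distinct boxes chosen uniformly among all $N$, so the chance that one of them is the box containing key $i$ is $\binom{N-1}{a-1}/\binom{N}{a} = a/N$. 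For independence, the key observation is that the positions of the $n=N$ keys form a uniformly random permutation, but each prisoner's success depends only on whether key $i$ lies in that prisoner's chosen $a$-subset, and since different prisoners choose their subsets independently and the marginal distribution of the location of key $i$ is uniform on $[1,N]$ for every $i$, the joint success pattern factorizes. (A cleaner way: condition on the permutation $\pi$; then prisoner $P_i$ succeeds iff $\pi(i)$ lies in $P_i$'s random $a$-subset $S_i$; since the $S_i$ are independent of each other and of $\pi$, and $\Pr[\pi(i)\in S_i \mid \pi] = a/N$ regardless of $\pi$, the conditional success events are independent with probability $a/N$, hence so are the unconditional ones.)

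Granting this, the number of winners is a sum of $N$ i.i.d.\ Bernoulli$(a/N)$ variables, so it is Binomial$(N, a/N)$, which gives
\[
P_{\rm RS}(a,w) = \binom{N}{w}\Big(\frac{a}{N}\Big)^w\Big(\frac{N-a}{N}\Big)^{N-w},
\]
i.e.\ \eqref{eq:random_exact}. Then by the definition of the minimum P-function recalled in Section \ref{sec:2},
\[
P^{\rm min}_{\rm RS}(a,w) = \sum_{w'=w}^{N} P_{\rm RS}(a,w') = \frac{1}{N^N}\sum_{w'=w}^{N}\binom{N}{w'}\,a^{w'}\,(N-a)^{N-w'},
\]
after clearing the common denominator $N^N = \big(\tfrac{1}{N}\big)^{-N}$ from the product of the two factors $(a/N)^{w'}$ and $((N-a)/N)^{N-w'}$. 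This is exactly \eqref{eq:random_min}.

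The main obstacle is the independence claim: it is tempting but wrong to say ``the $N$ keys are placed by a random permutation, so the locations are dependent, hence the successes are dependent.'' The point that needs care is that what matters is not the joint law of the key positions but the joint law of the success indicators $\mathbf{1}[\pi(i)\in S_i]$, and the external randomness in the independently drawn subsets $S_i$ ``washes out'' the dependence coming from $\pi$. I would make this rigorous exactly by the conditioning argument sketched above, being explicit that each $S_i$ is drawn uniformly from the $a$-subsets of $[1,N]$ independently of $\pi$ and of the other $S_j$, and that $\Pr[\pi(i)\in S_i\mid \pi] = a/N$ is a constant not depending on $\pi$ — which is what lets one pull the conditioning out and conclude genuine (not merely conditional) independence. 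Everything after that is the routine algebra of the binomial distribution and the definition of $P^{\rm min}$, and I would not belabor it. As a sanity check one can verify the reflection symmetry \eqref{eq:symmetry} is consistent with the summed formula and that $w=0$ gives $P^{\rm min}_{\rm RS}(a,0)=1$.
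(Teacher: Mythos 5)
Your proposal is correct and follows essentially the same route as the paper: the paper simply asserts the exact binomial formula \eqref{eq:random_exact} with a one-line heuristic and states that the proposition ``follows immediately'' by summing over $w'\ge w$. Your conditioning argument for the independence of the success indicators (which is the only genuinely nontrivial point, since the key positions themselves are dependent) is a welcome rigorization of what the paper leaves implicit.
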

\noindent The proof of this follows immediately from \eqref{eq:random_exact}. In Figure \ref{fig:random_main} we show the heat-maps of the P-functions, i.e. the probability of winning, of the random strategy. The bifurcation that we observe can be seen as illustrating the cumulative distribution function for the binomial distribution,
\begin{equation}
P^{\rm min}_{\rm RS}(a,w+1) = 1 - P(X_{\rm RS}\le w)
\end{equation}
where the random variable $X_{\rm RS}$ is defined as
\begin{equation}
\label{XRS}
X_{\rm RS} = \sum_{i=1}^n \sum_{j=1}^a X_{ij},
\end{equation}
where each $X_{ij}$ is either 0 or 1 depending on whether or not the $P_i$ has found her key at the $j^{\rm th}$ attempt.
\begin{figure}
\begin{center}
\label{heatmap_exact_winners_randomclosed_N100_n100_natural}
\includegraphics[width=0.45\textwidth]{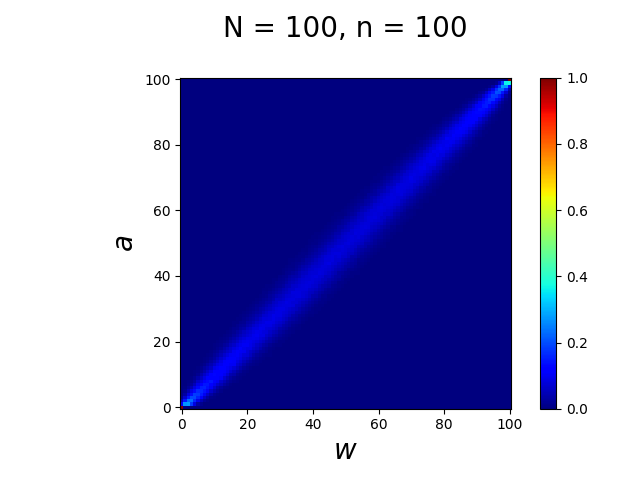} 
\includegraphics[width=0.45\textwidth]{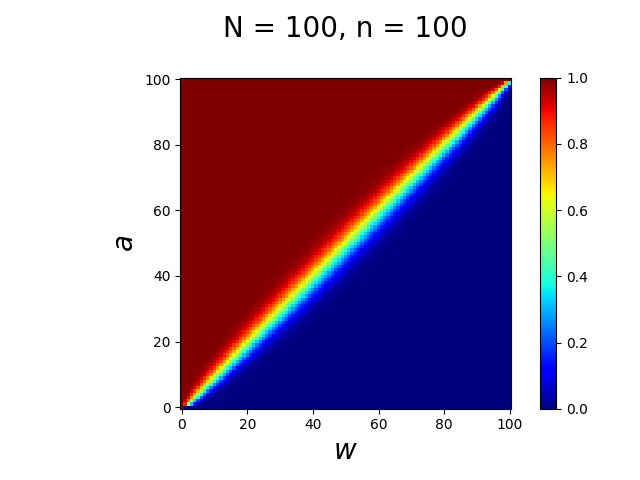} 
\end{center}
\caption{Random strategy P-functions. Left: exact winners heatmap. Right: minimum winners heatmap}\label{fig:random_main}
\end{figure} 
The exact winners P-function is peaked around the diagonal, with absolute maxima at  $(a,w)=(0,0),(100,100)$. This is to be expected because of the symmetry shown in \eqref{eq:symmetry}. More interesting is the minimum-winner P-function which is a very precise approximation for the Heaviside $\theta(x)$. Specifically, the graphs above show that
\begin{equation}
P_{\rm RS}^{\rm min}(a,w)\sim \theta(a-w).
\end{equation}
The random strategy can be considered as the benchmark for all the other strategies since it is reasonable to expect that any clever strategy should at least be as good as the random strategy. We will see how this reasonable assumption is encoded in our study later, when we will define a numerical estimator for the efficiency of a strategy.
\subsection{Key strategy KS$^0$}
The second strategy we analyze corresponds to the solution which maximizes the winning probability for $w=N$ players in the classical (constrained) problem \cite{CW}. We shall see that, when extended to the whole $a$-$w$ plane, its optimality persists but it is not evident as it was for the classical (constrained) problem.
In fact, the key strategy maintains a high success probability only in specific regions, a feature which to our knowledge has never been observed before. The key strategy KS$^0$ coincides with the following algorithmic steps:
\begin{enumerate}{\tt 
\item Player $B_i$ opens Box $i$.
\item If Box $i$ does not contain the key $i$, but key $j$, open Box $j$.
\item Repeat until the key $i$ is found OR stop after $a$ attempts.}
\end{enumerate}
In short, the strategy exploits the fact that the key-numbers and box-numbers create a structure of permutation cycles, which each player simply follows. Hence, for this strategy to be optimal, the offset is ${D}_i=0$ for all $i=1,\dots,N$. That is the only way a player is sure she will open only boxes-numbers which create, in combination with the numbers if the keys they contain, a cycle structure: a player, regardless from its identification number, will open only boxes, $a$ in total, in a permutation cycle surely containing her own key. Furthermore, a player $P^i$, with initial offset $D_i=0$, is sure to find her key in exactly $l$-attempts, if her key is in an $l$-cycle structure with the box-number.

The classical case then corresponds to $w=N$ and $a=N/2$, where the probability of success is given by
\begin{equation}
\label{eq:p_classical}
P_{\mathrm{KS}^0}(N/2,N)=1 - \sum_{k = N/2 + 1}^N\frac{1}{k} = 1 - \ln 2 + o(1)
\end{equation} 
which follows from counting the number of cycles $c$ of length $l_C\le a=N/2$, which produce exactly $l_c$ winners. Before proceeding, let us first recall the proof of this formula. Consider first the cycle decomposition of a given permutation of $N$ numbers. % (see figure \ref{key_cycles}). 
It can be written as a partition:
\[
N=\mathcal{N}_1+\mathcal{N}_2+\dots+\mathcal{N}_N    %\qquad \;\;\, 1\le m \le N
\]
where $\mathcal{N}_k=k\cdot \alpha_k$ and $\alpha_k$ is the number of cycles of length $k$ present in the partition. In the following we will indicate a partition with $p$ and its cycle structure with $(\alpha_1,\alpha_2,\dots,\alpha_N)$ and unify the two notations by writing $p=(\alpha_1,\alpha_2,\dots,\alpha_N)$. Clearly, for $k>\lfloor N/2\rfloor$, the largest integer smaller than or equal to $N/2$, we have $\alpha_k=0,1$. Similarly, any permutation will probably have some $\mathcal{N}_k=0$, because $\alpha_k=0$.

Now, it is relatively easy to compute the number of permutations which contain at least (and obviously at most) a cycle of length $l>\lfloor N/2\rfloor$ can be calculated as follows:
first consider all possible ways to extract $l$ elements from $N$. For each of these $l$-elements, there exists $(l-1)!$ inequivalent permutations which combine into an $l$-cycle (easy proof by induction). Finally, we can consider all the permutations of the remaining $(N-l)$ elements. The final formula hence reads:
\begin{equation}
\label{eq:class_count}
\Omega_l^N={N\choose l} (l-1)! (N-l)!=\frac{N!}{l}.
\end{equation}
Dividing this formula by $N!$ we obtain the probability that a permutation contains an $l$-cycle, $l>N/2$, from which the formula \eqref{eq:p_classical} is derived.

Next, we are interested in extending this formula to the cases $w<N$ winners and $a$ unconstrained. The counting is not easy anymore, as cycles of length $l$ not greater than $\lfloor N/2\rfloor$ may appear with multiplicity $\alpha_l>1$. There exists however a formula which counts the multiplicity of a certain partition of cycles of $N$, $p=(\alpha_1,\dots,\alpha_N)$ \footnote{An extension of this formula, presented in \eqref{eq:recursive_Omega}, allows for an explicit expression for $\Omega^N_{\alpha_l=k}$ in terms of more easily computed numbers $\Omega^{k_1\cdot l}_{\alpha_l=k_1}$ and $\Omega^{N-\alpha_1\cdot l}_{\alpha_l=k-k_1}$. For the scope of this section, this formula will not be necessary.}:
\begin{equation}
\label{eq:perm_cycle_count}
\Omega^N_{(\alpha_1,\dots,\alpha_N)}=\Omega^N_p=\frac{N!}{{1^{\alpha_1}\dots N^{\alpha_N}\,\alpha_1!\dots \alpha_N!}}\;.
\end{equation}
To give a simple example, assume $N$ even, and we want to find how many permutations of $N$ are such that their partition of cycles is $p=(0,\dots, \alpha_{N/2}=2,0,\dots,0)$. There are $\Omega^N_p=\frac{N!}{(N/2)^2 \cdot 2!}$ such permutations. For $N=4$, the result is simply $\Omega^4_{(0,2,0,0)}=3$, since there are 3 permutations of the numbers $(1,2,3,4)$ which contain two 2-cycles: $(2,1,4,3)$, $(3,4,1,2)$ and $(4,3,2,1)$.\par
We also note that, given a partition $p$ of $N$ as in the above, into cycles of length $i$, each cycle of length $i \le a$ will produce ${\mathcal N}_i$ winners, while each cycle of length $j >a$ will produce $j$ losers. Hence the total number of winners for a given partition $p$ of $N$ is given by:
\begin{equation}
\label{partsum}
w_p(a)=\sum_{{\mathcal N}_i\le a} {\mathcal N}_i\,.
\end{equation}
From the preceding discussion, we obtain the general formulas below.
\begin{prop}
\label{keyprob}
The key strategy for $a$ attempts and \emph{exactly} $w$ winners has P-function 
\begin{equation}
P(a,w)= \frac{1}{N!}\sum_{p} \Omega^{N}_{p}\cdot\delta\big(w,w_p(a)\big)\;, 
\end{equation}
where $\delta$ is the Kronecker delta, equal to $1$ only when $w=w_p(a)$.
Similarly, the P-function for the key-strategy with $a$ attempt and \emph{at least} $w$ winner reads:
\begin{equation}
P(a,w)= \frac{1}{N!}\sum_{p} \Omega^{N}_{p}\cdot\theta\big(w-w_p(a)\big)\;, 
\end{equation} and 0 otherwise.
\end{prop}
In the above equations the sum is extended over all partitions $p$ for which the number $w_p(a)$ in \eqref{partsum} is equal-to or no-less-than $w$.
In figure \ref{KSplots} we show the P-functions of the key strategy, which possess unique features: for small values of both $a$ and $w$, both P-functions resemble the random P-functions (with a larger spread) while the minimum-winner P-function is symmetric under exchange $a\leftrightarrow w$ around the second diagonal.\\
\begin{figure}[t]
\begin{center}
\includegraphics[width=0.45\textwidth]{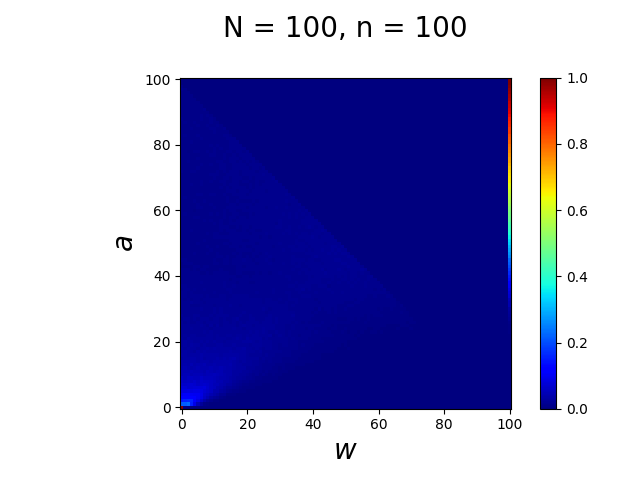} 
\includegraphics[width=0.45\textwidth]{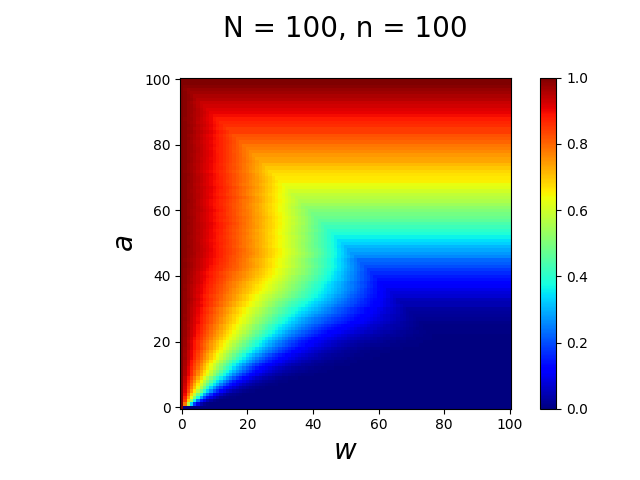} 
\end{center}
\caption{\label{KSplots} Key strategy P-functions. Left: exact-winner heatmap. Right: minimum winner heatmap.}
\label{fig:key-main}
\end{figure} 

\subsection{Box strategy BS}
Finally, we discuss the simplest strategy to implement, the box strategy. As we will observe heuristically and experimentally, the choice of non-zero offset(s) is immaterial to the box strategy P-functions, but for clarity of exposition, let us consider first the case $D_i=0$. The algorithmic steps are:  
\begin{enumerate}{\tt 
\item Player $P_i$ opens Box $i$.
\item If Box $i$ does not contain the key $i$, open Box $(i+{I})$ mod $N$
\item Repeat until the key $i$ is found OR stop after $a$ attempts.
\item[(${C}$)] The increment ${I}$ is coprime to $N$.}
\end{enumerate}
The above condition (${C}$) can be explained as follows: if $\gcd({I},N) = d>1$, then each player will re-open the same box previously opened in attempt $N/d$, which could be considered a ``period''. This of course renders the strategy unbounded, significantly decreasing the probability of success. Hence we will not consider this unbounded variant, but instead always impose condition (${C}$) on the box strategies.
\begin{rem}
Although box strategies clearly aim to put an order in the selection of the boxes, they are by all means sub-cases of the random strategy. Unfortunately, neither strategy is equipped to ``unravel'' the random distribution of keys, i.e. to find an order in the disordered key positioning. The randomness of the keys' positioning is then the leading factor to be considered when calculating probabilities with box and random strategies. Whether the box selection is ordered or not, it is of little importance since the keys have been randomly distributed. Indeed, as we shall soon show by means of analytical and statistical approaches, the box strategy converges to the random strategy for $N$ sufficiently large (see Figure \ref{fig:box_str})
\end{rem}
\begin{figure}[h]
\begin{center}
\includegraphics[width=0.45\textwidth]{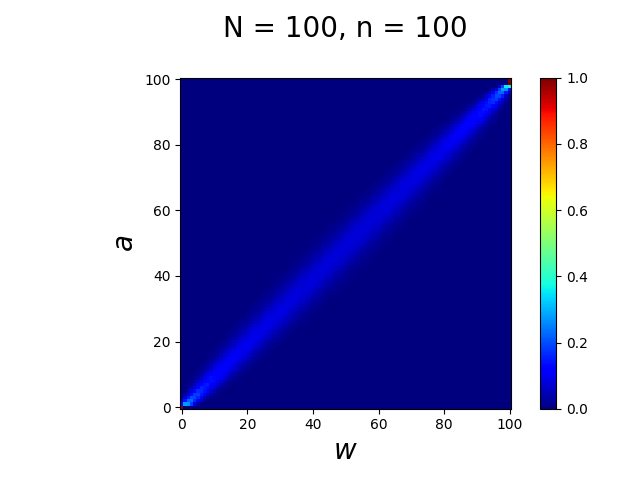} 
\includegraphics[width=0.45\textwidth]{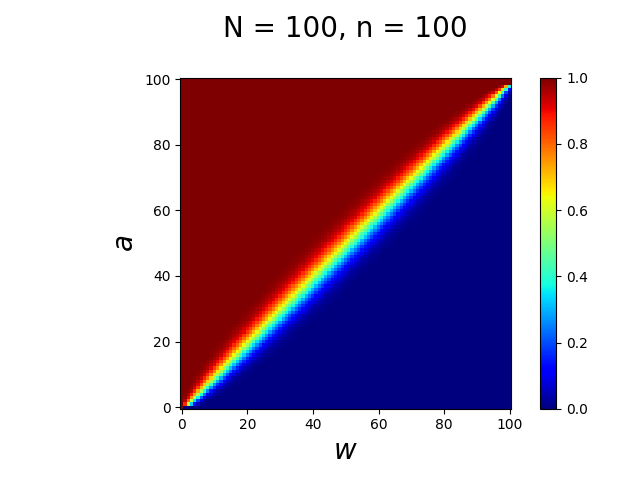} 
\end{center}
\caption{Box strategy P-functions with increment 1, offset 0. Left: exact winners heatmap. Right:minimum winners heatmap.}
\label{fig:box_str}
\end{figure} 
To compute the probability of winning with this strategy, we need to enumerate  permutations with restricted positions. For clarity of exposition, let us first take the offset ${D}_i=0$ for all $i$, and generic increment $I$ coprime to $N$. This means that, for player $i$ to win, her key $i$ needs to be in a box between $i,i+I \dots i+(a-1)I$. This problem is complementary, in a way succinctly expressed in the proof of Lemma \ref{lem:box},  to the famous ``probl\`{e}me des rencontres'' \cite{2disc}, ``probl\`{e}me des m\'{e}nages'' \cite{3disc}, and generalizations thereof, more generally known as the counting of restricted permutations, often expressed in the language of chessboards and rooks positioning \cite{riordan}.
 These problems have a long history and are notoriously hard to solve: to the best of our knowledge, a closed formula for the number of $p$-discordant permutations, i.e. permutations for which \emph{no key} $i$ is in its box $i$, or in any successive boxes until $i+p-1$, exists only up to $p=5$ \cite{5disc}.\par
Here we will present the results for the first two cases $a=1,2$, which are related to 1- and 2-discordant permutations  respectively.
\begin{prop} 
\label{boxprob}
For any $1\le w \le N$, we have
\[
P_\mathrm{BS} (1,w) = \frac{1}{w!}\sum_{k=0}^{N-w}\frac{(-1)^k}{k!},
\]
and
\[
P_\mathrm{BS} (2,w)=\frac{c_w}{N!}=\frac{1}{N!}\sum_{k=w}^N (-1)^{k-w} \,\frac{2N}{2N-k}{2N-k \choose k}(N-k)! {k \choose w}.
\]
\end{prop}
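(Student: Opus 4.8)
The plan is to reduce both formulas to classical enumeration of permutations with forbidden positions, using the fact (established in the proof of Lemma~\ref{lem:box}) that the box strategy with offset $0$ and increment $I$ coprime to $N$ makes player $i$ a winner in $a$ attempts precisely when her key $i$ lies in one of the boxes $i, i+I, \dots, i+(a-1)I \pmod N$. Relabeling boxes by the bijection $x \mapsto I^{-1}x \pmod N$ (which is legitimate since $\gcd(I,N)=1$ and which sends the arithmetic progression to a consecutive block), a permutation $\pi \in S_N$ corresponds to a key arrangement, and player $i$ wins iff $\pi(i) \in \{i, i+1, \dots, i+a-1\}$, with indices mod $N$. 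Thus the number of arrangements in which a \emph{prescribed} set of $w$ players win and the other $N-w$ do \emph{not} is what we must count, and then multiply by $\binom{N}{w}$ — no, more carefully, sum over which set of size $w$ wins, but by symmetry of the cyclic structure this is $\binom{N}{w}$ times the count for a fixed set; I would phrase it instead via inclusion–exclusion directly, as below.

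For $a=1$: player $i$ wins iff $\pi(i)=i$, i.e. $i$ is a fixed point. So the number of permutations with exactly $w$ winners is the number with exactly $w$ fixed points, which is the rencontres number $D_{N,w} = \binom{N}{w}D_{N-w} = \frac{N!}{w!}\sum_{k=0}^{N-w}\frac{(-1)^k}{k!}$, where $D_m$ is the number of derangements of $m$ elements (standard inclusion–exclusion on the set of fixed points). Dividing by $N!$ gives $P_{\rm BS}(1,w) = \frac{1}{w!}\sum_{k=0}^{N-w}\frac{(-1)^k}{k!}$, as claimed; this half is essentially bookkeeping.

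For $a=2$: after relabeling, player $i$ wins iff $\pi(i) \in \{i, i+1\} \pmod N$, so a \emph{loser} is a position $i$ with $\pi(i) \notin \{i,i+1\}$; equivalently, counting winners by inclusion–exclusion over subsets of ``forced wins'' reduces to the \emph{ménage-type} board consisting of the main diagonal together with the superdiagonal, cyclically wrapped — exactly the $2$-discordant (reduced ménage) configuration. The key step is the rook-polynomial / Lucas-style count: the number of ways to place $k$ non-attacking rooks on this cyclic width-$2$ strip is the classical ménage coefficient $\frac{2N}{2N-k}\binom{2N-k}{k}$. I would either cite this directly or derive it by the standard transfer-matrix / Kaplansky circular-selection argument (choosing $k$ pairwise non-adjacent cells around a cycle of length $2N$). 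Feeding this into inclusion–exclusion: the number of permutations with at least a given set of $k$ prescribed winners contributing, summed appropriately, yields $c_w = \sum_{k=w}^{N}(-1)^{k-w}\frac{2N}{2N-k}\binom{2N-k}{k}(N-k)!\binom{k}{w}$, where $(N-k)!$ permutes the remaining unconstrained elements and $\binom{k}{w}$ extracts exactly $w$ true winners from $k$ inclusion–exclusion-selected ones. Dividing by $N!$ gives the stated $P_{\rm BS}(2,w)$.

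The main obstacle is the $a=2$ case, specifically justifying that the relevant board is the cyclic width-$2$ strip and that its rook numbers are the ménage numbers $\frac{2N}{2N-k}\binom{2N-k}{k}$ — one must be careful that the ``at most one key per box'' constraint plus the cyclic wrap-around do not introduce boundary corrections, and that the $D_i=0$ assumption (rather than general offsets) is genuinely without loss of generality here, which is where the remark about offset-independence of the box P-function is invoked. Everything downstream of the rook count is a routine inclusion–exclusion manipulation, rearranging the alternating sum and collecting the $\binom{k}{w}$ factor.
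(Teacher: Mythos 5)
Your proposal is correct and follows essentially the same route as the paper: both cases reduce to the rencontres and m\'enage hit-counts, with the $a=2$ coefficients coming from the non-attacking rook numbers $\frac{2N}{2N-k}\binom{2N-k}{k}$ of the cyclic width-2 board. The only cosmetic difference is that the paper extracts these counts as the coefficients of $x^w$ in Riordan's generating polynomials $\sum_k \frac{N!}{k!}(x-1)^k$ and $\sum_k \frac{2N}{2N-k}\binom{2N-k}{k}(N-k)!(x-1)^k$, whereas you unpack the same inclusion--exclusion by hand starting from Kaplansky's circular-selection count.
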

For a proof of the above two formulas we refer the reader to Appendix \ref{proof:prop3.4}.

\begin{cor}
$P_\mathrm{BS} (1,w)$ converges to $P_{\rm RS}(1,w)$ for $N$ large enough.
\end{cor}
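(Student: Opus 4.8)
The plan is to show that, for each fixed $w$, both $P_\mathrm{BS}(1,w)$ and $P_{\rm RS}(1,w)$ converge, as $N\to\infty$, to the same limit, namely the Poisson weight $e^{-1}/w!$; the corollary then follows at once, since the difference of the two quantities tends to $0$.

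First I would treat the box-strategy side. By Proposition \ref{boxprob} we have $P_\mathrm{BS}(1,w)=\frac{1}{w!}\sum_{k=0}^{N-w}\frac{(-1)^k}{k!}$, and the inner sum is precisely the $(N-w)$-th partial sum of the alternating series $\sum_{k\ge 0}(-1)^k/k!=e^{-1}$. Since that series converges (absolutely), its partial sums converge to $e^{-1}$, with error controlled by the alternating-series estimate $\bigl|\,e^{-1}-\sum_{k=0}^{N-w}(-1)^k/k!\,\bigr|\le \tfrac{1}{(N-w+1)!}$. Hence $P_\mathrm{BS}(1,w)\to e^{-1}/w!$ as $N\to\infty$ with $w$ fixed.

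Next I would treat the random-strategy side. From \eqref{eq:random_exact}, $P_{\rm RS}(1,w)=\binom{N}{w}N^{-w}\bigl(1-\tfrac1N\bigr)^{N-w}$. Writing $\binom{N}{w}N^{-w}=\frac{1}{w!}\prod_{j=0}^{w-1}\bigl(1-\tfrac{j}{N}\bigr)$, the product tends to $1$ for fixed $w$, while $\bigl(1-\tfrac1N\bigr)^{N-w}=\bigl(1-\tfrac1N\bigr)^{N}\bigl(1-\tfrac1N\bigr)^{-w}\to e^{-1}\cdot 1$. Multiplying these limits gives $P_{\rm RS}(1,w)\to e^{-1}/w!$. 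Combining with the previous paragraph, $|P_\mathrm{BS}(1,w)-P_{\rm RS}(1,w)|\to 0$, which is the assertion.

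The only delicate point — and it is a mild one — is that this is a pointwise (fixed-$w$) statement and does not by itself give convergence uniform in $w$, which is what one would want in order to say anything about the error index $\epsilon$ of \eqref{err} between the two full P-functions at $a=1$. To upgrade to a uniform bound one would have to estimate $\prod_{j<w}(1-j/N)$ and $(1-1/N)^{-w}$ using only $w\le N$ and carry the resulting errors through, together with the factorial bound above; but for the corollary as stated the fixed-$w$ argument is all that is needed, so I would not pursue the uniform version here.
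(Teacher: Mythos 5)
Your proof is correct and follows essentially the same route as the paper: both arguments show that $P_\mathrm{BS}(1,w)$ and $P_{\rm RS}(1,w)$ tend, for fixed $w$, to the common Poisson limit $e^{-1}/w!$ (the paper via $\lim_N D_{N,w}/N!$ and the standard Poisson limit of the binomial, you via the explicit alternating-series and product computations). Your version merely fills in the elementary estimates that the paper cites as known facts, and your closing remark about uniformity in $w$ is a reasonable aside but not needed for the statement as given.
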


\begin{proof}
This follows from the relation $P_\mathrm{BS} (1,w) = {D_{N,w}}/{N!}$ in \eqref{eq:Dnw} and \eqref{eq:Dn0}, and fact that 
\[
\lim_{N\to\infty}\frac{D_{N,w}}{N!} = \frac{e^{-1}}{w!},
\]
so the limiting distribution is equal to the Poisson distribution with expected value 1, which is the same limit as the binomial distribution.
\end{proof}
\begin{rem}As we have mentioned, analytic formulas exists for 3-,4- and 5-discordant permutations, namely for $a=3,4,5$ attempts (we refer to \cite{3disc,4disc,5disc} for details). In Figure \ref{fig:box_str} we show the exact- and minimum-winner P-functions for the box strategy with increment 1 and zero offset. As we observe, the P-function appears to be identical (within small margin of errors) to the random-strategy P-function, as we expected and argued above.
\end{rem}

Since the offset has been set to zero for simplicity, $P_i$ can only find their key within boxes $(i, i+1,\dots i+a-1)\bmod N$. Now consider the simple case $a=1$. It is clear that as long as prisoners are assigned a box number in a one-to-one correspondence, it  does not matter which box number that is, but the above arguments can be applied after a simple re-ordering of box (or player) numbers, e.g., $P_i$ can be assigned box $i+{D}$ as her own fixed point. For the case $a=2$, it does not matter whether $P_i$ opens box $(i,i+1) \bmod N$ or $(i+{D},i+{D}+{I})\bmod N$.

 Similarly, it does not depend on the fact that some prisoners may decide to change their offset and/or increment. This is the case, for instance, where the box $b_{i,j+1}=f(\{b_{i,j}\})$, where $f$ is a bijective function on $[1,\dots, N]$. This strategy would constitute an intermediate case between box strategy and random strategy, but since their P-functions are (almost) identical, a strategy that interpolates between the two will necessarily have (almost) identical P-function. It is natural then to ask whether all properly bounded strategy $S$, that is independent of the key-numbers $K$, will asymptotically approach the random strategy RS. Our many simulations have confirmed this to be true. We can then present the most general, naturally properly bounded strategy, not based on the key numbers, BS:
\begin{enumerate}{\tt 
\item Player $P_i$ opens Box $(i+{D}_i) \bmod N$.
\item If Box $(i+{D}_i)\bmod N$ does not contain the key $i$, open Box $(i+{D}_i+{I}_i)\bmod N$, with ${I}_i$ such that no box is opened more than once.
\item Repeat until the key $i$ is found OR stop after $a$ attempts.
\item[(${C}$)] The increment ${I}_{i,j}$ should be such that no boxes are opened more than once.}
\end{enumerate}
We will formulate this `convergence' to the random strategy in more general terms in Conjecture \ref{conj} below.

Finally, even though analytic results are not generally known, approximations for the enumerators of discordant permutations have been studied.  For instance, \cite{riordan} shows that if $(a-1)< N^{1/3}$, the normalized probability $P_{\rm BS}^N(a,w)$ can be expanded in inverse powers of $n$ (or $(n)_r=n\cdot (n-1)\cdots (n-r+1)$), with leading order term corresponding to a Poisson distribution. This suggests that, for $N$ large enough, the $P_{\rm BS}$ will approximate $P_{\rm RS}$, as it was hinted at earlier.

Let us give a heuristic argument to justify this. As we have seen, the random positioning of the key can be ``compensated'' by a smart choice of the strategy exploiting the cycle-decomposition of any random permutation of keys. On the other hand, without exploiting this cycle decomposition, it is hard to devise a way by which to find the keys in a specific order with any certainty. Hence, lacking a smart strategy to unravel the random distribution of keys, \emph{any} other strategy will necessarily result in an approximate random probability of winning: even if the choice of boxes follows a certain order, as in the box strategy described above, the probability of winning will just reflect the randomness of the key distribution. If the choice of boxes is also random, then the probability will remain random, since a random shuffling of randomly distributed elements will still give a random distribution. This approximation will be more and more valid as the number of randomly shuffled elements increases.  For small values of $N$, there may be finite differences, but they become negligible as $N$ grows large. We shall describe this more concretely in \eqref{sec:conj} below.

\section{\label{hybrid}Hybrid strategies}
\label{sec:4}
\subsection{Escape routes}
In the previous section we analyzed the three general classes of properly bounded strategies to approach the PSG. Crucially, for the box-strategy and the random strategy to be properly bounded, a constraint ${C}$ was required. In the case of the key strategy, it is the condition ${D}=0$ that makes the strategy properly bounded. To understand why, imagine player $P_i$ first opens the box $j\neq i$. Now, $P_i$ can either be inside her cycle or inside a different cycle. In the latter case, $P_i$ may enter a cycle of length smaller than $a$ and hence be forced to re-open certain boxes, never to find hers.

If for some reason the constraints on the increment (BS) or condition on the offset (KS) are not satisfied, the strategies become unbounded. Nevertheless, it is possible to recover proper boundedness by adding an extra algorithmic step, an escape route E, to all strategies:
\begin{enumerate}
\item[E]: If the next box to be opened has already been opened, choose another one to open, among the un-opened boxes.
\end{enumerate}
This choice can again be made in one of three ways:
\begin{itemize}
\item choose the next box as indicated by the key number, $E=K$
\item choose the next box randomly among the boxes not opened so far, $E=R$
\item choose the next box sequentially, $E=B$
\end{itemize}
Note however that the first choice above does not necessarily, and on its own, imply boundedness. So we will neglect for the time being the key strategy as an escape route, though we will later present a (bounded) strategy that contains the KS as an escape.
Another important feature, strengthened by the results of the previous section, is that the difference between a sequential or random choice of box to open is, within small statistical errors, inessential: as long as the key are distributed randomly, the P-function for the RS and BS are almost identical. This seems to suggests that the difference between random and sequential choice of the next box to open is small enough to be neglected. This fact seems to be confirmed by the simulations (we encourage the curious reader to check this statement making use of \cite{psg_package}), though as we shall see, our efficiency index will pick up on small differences between strategies with a box or a random strategy as an escape.

For the time being we can, without loss of generality, consider the P-functions for the hybrid strategies KS and BS complemented by a random choice of the next box to be opened as an escape route. Note that, given our definition at the opening of section \ref{sec:3}, the random strategy will never require an escape route \footnote{In fact, the hybrid strategy random with a random escape route corresponds by definition to the bounded random strategy, whose P-function has been shown in figure \ref{fig:random_main}}, as the constraint assures proper boundedness. Hence, we can still consider its P-function as the benchmark of all hybrid strategies with $n=N$ .

Further extensions of the game are possible. Of particular interest in practical applications is the case $n<N$, for which some boxes will not contain a key, they are empty. In this case, an escape route will be needed while using the key-strategy if the last opened box is empty. We will also show an example of a bounded hybrid strategy \cite{GS}, the Goyal-Saks algorithm, which uses $E=K$ escape route for the BS with $I=1$ and a surplus. Finally, we will present simulations of a properly bounded strategy which uses a notion of fictitious keys as an escape route for the KS \cite{Avi}.
\subsection{$n=N$ hybrid strategies}
The first hybrid strategy we consider is the unbounded key strategy, ${D}\neq 0$, to which we add a random choice as an escape route whenever the player is about to open an already-opened box. The algorithmic steps are:
\begin{enumerate}{\tt 
\item Player $P_i$ opens any Box $j\neq i$.
\item If Box $j$ does not contain the key $i$, but key $k$, open Box $k$.
\item Repeat until the key $i$ is found OR stop after $a$ attempts.
\item[(E)] If the Player is forced to open a box she already opened, pick the next box randomly among the unopened boxes}
\end{enumerate}
We present in figure \ref{fig:key+rand} results of the Monte Carlo simulation for this hybrid strategy.

As we see immediately, the P-function for the hybrid of a key strategy with a random (or sequential) escape route approximates with high precision the random strategy P-function in figure \ref{fig:random_main}. 
The next hybrid strategy we consider is the unbounded box strategy, i.e. the increment ${I}$ is not coprime to $N$, with $d={\rm gcd}(I,N)$ complemented by a random-choice escape strategy. We call $\tau=N/d$ steps the \emph{period}, the number of attempts after which all prisoners will be forced to open again an already-opened box. The algorithmic steps describing this BS hybrid are:
\begin{enumerate}{\tt 
\item Player $P_i$ opens any Box $j$.
\item If Box $j$ does not contain the key $i$, open box $j+I \bmod \; N$, $I$ not coprime to $N$
\item Repeat until the key $i$ is found OR stop after $a$ attempts.
\item[(E)] If the Player is forced to open a box she already opened, pick another box randomly among the unopened boxes}
\end{enumerate}
The P-function is plotted in figure \ref{fig:box+rand}. It again corresponds, with high accuracy, to the random strategy P-function.
\subsection{$n<N$ hybrid strategies}
The case of prisoner search games with empty boxes is one of the most explored variants in the literature. It clearly necessitates of an escape route to balance the presence of empty boxes and still enforce proper boundedness. We first consider a benchmark for these strategies, the properly bounded random-strategy whose analytic P-function, shown in figure \ref{fig:random_n}, reads:
\begin{equation}
\label{eq:random_str_n}
P(a,w)= {n \choose w}\Big(\frac{a}{N}\Big)^w\Big(\frac{N-a}{N}\Big)^{n-w}\;.
\end{equation} 
In parallel to the previous subsection, we now define two types of hybrid strategies based on the unbounded key- and box-strategies.\par
 First, we consider the hybrid strategy obtained from the unbounded box strategy complemented by a random escape route (figure \ref{fig:box+random}). As one would expect simply by comparing the P-functions for random and box strategies, the exact- (and hence the minimum-)winners P-function for this hybrid will still be peaked only around the diagonal of equation: $a=\frac{N}{n}w$ (see figure \ref{fig:box+random} for the case $n=50$).

The second case of hybrid strategy we consider is obtained from a key strategy with $D=0$, KS$^0$, complemented by a random or sequential escape route. In this case, one can intuitively expect (the box-selection imposed by) this hybrid strategy to ``collapse'' to the random-strategy if $n\ll N$: many boxes are empty, forcing prisoners to opt for a random selection of the next box to open. This argument implies that for $N-n$ large enough, the hybrid of an unbounded key strategy will reduce to a random strategy. 
In figure \ref{fig:2hybr} we show that case $n=N-n=50$ and note some very small differences, not connected to statistical errors, for small values of $a$ and $w$.\footnote{The same hybrid strategy, based on the key strategy with random or sequential escape route, but non-zero initial offset $D$ gives instead efficiency slightly below $1$.}

Interestingly, even for $N-n=1$ (only one empty box), the P-functions for this hybrid strategy already possess the characteristic profile (along the diagonal) of the random-strategy P-functions, see figure \ref{fig:key+random_offset0} and \ref{fig:key+box_offset0}. 
\subsection{The Avis-Devroye-Iwama (ADI) strategy}
\label{ADI}
So far we have presented strategies easily obtained as combinations of the three main strategies, random, key and box. There are of course examples of hybrid strategies whose escape does not belong to any of the above three categories. Examples of such strategies were presented in \cite{Avi}, where it was initially assumed $n<N$ and that all prisoners are surely aware of the value of $n$ and $N$, and hence $N-n$ \footnote{Note that, while it was always in principle possible for the prisoners to know their total number $n$ and the boxes number $N$, all the strategies analyzed so far were constructed regardless of this information, i.e. all prisoners would be given a unique list of boxes to open, even without knowledge of the exact number of empty boxes, $N-n$}. We will consider here only the first example, named PF-1, since the second example, PF-2$(t)$, reduces to PF-1 for specific values of the parameter $t$ and has been shown to produce lower winning probability in general. The strategy PF-1 can be described by the following algorithmic steps:
\begin{enumerate}{\tt 
\item Player $P_i$ opens Box $i$ ($D=0$) and sets an index $j_i=0$, which counts the number of empty boxes that are opened during the search
\item If Box $i$ contains key $j\neq i$, open box $j$; if box $i$ is empty, increment $j_i$ by $1$, and go to open box $n+j_i$
\item Repeat step 2 until the key $i$ is found OR stop after $a$ attempts.}
\end{enumerate}

It is immediately clear why the information about the exact value of $N$ and $n$ is required, since otherwise the prisoner would not know which box to open after having opened an empty box. The index $j_i=1,\dots,N-n$ counts, in the same order they have been opened by each prisoner, the empty boxes. Since this opening order may vary from prisoner to prisoner, it is important to realize that the cycle structure of the keys, the union of the `real' keys numbered from $1$ to $n$, and the `fictitious' keys which force the players to open the boxes from $n+1$ to $N$ (for which there is no actual key present), is not unique in this case, but it definitely varies from prisoner to prisoner (We refer the reader to the explicit example given in \cite{Avi}). This of course does not happen when $N = n + 1$, only one empty box, since in that case all prisoners will agree on its box number and the P-functions will look exactly as Figure \ref{KSplots}. Hence, for this strategy alone, we will not show the case $n = 99$, but $n = 98$ instead. In figure \ref{fig:adi98} and \ref{fig:adi} we show the P-function for the strategy PF-1, for $n=98$ and $n=95$ respectively. For $n$ small enough, the P-function will approximate once again to the P-function for the random strategy, shown in figure \ref{fig:random_n}. 

\subsection{The Goyal-Saks (GS) strategy}
\label{GS}
Finally, we simulate the P-function for the Goyal-Saks algorithm in \cite[Theorem 1]{GS}, in the general case $w<n$. This hybrid strategy is bounded, but not properly bounded. The strategy of Goyal and Saks is as follows. Let $d=\lfloor{N/n}\rfloor$ and let us denote by $[s,t]$ the set of integers $\{s, s+1,\dots, t-1,t\}$ if $s\le t$ and $\{s,\dots, N\}\cup\{1,\dots t\}$ if $s>t$.  Define \texttt{occupied}$[s,t]$ to be the number of boxes that contain a key (hence are not empty) in $[s,t]$, and define 
\[
\mathrm{surplus}[s,t]= \mathrm{occupied}[s,t] - \frac{|s-t|}{d}.
\]
In other words, the surplus function measures how much the number of non-empty boxes in the interval $\{s,t\}$ differ from the average number of non-empty boxes in the same interval. For each $i$, we let $m(i)$ be the smallest integer such that surplus$[i,m(i)]$ is non-negative. Finally, we partition the boxes into bins $B_1,\dots, B_{n}$, where $B_i$ contains boxes $[d(i-1)+1,d(i-1)+d]$ for $i\in [1,n-1]$, and $B_n$ contains boxes $[d(n-1)+1, N]$.\footnote{Note that this is only one possible arrangement of boxes into bins when $N/n$ is not an integer.} The algorithm is then the following.
\begin{enumerate}{\tt
\item Player $P_i$ starts at the first box of the bin $B_i$.
\item Check boxes sequentially, keeping track of the surplus until surplus is non-negative, hence $m(i)$.
\item If box $m(i)$ contains the key, done.
\item If not, then box $m(i)$ will contain key $j$, go to bin $B_j$. Reset surplus and repeat.}
\end{enumerate}
This means that the player will follow a BS until the surplus is negative, then use the key strategy as an escape when the surplus becomes non-negative, then continue with the sequential selection of boxes, and so on. It is clearly a hybrid strategy, but not a properly bounded one.

Then the main result of \cite{GS} is that their strategy, GS, for $w=n$ and $a=N/k$ ($k$ also depends on $n$) has success probability at least 
\[
2^{-9\sqrt{kn}\log_2n -k}\;.
\]
 It is known that this lower bound can be improved to $2^{-3\sqrt{kn}\log_2n -2k\log_2 e/3},$ but it is ineffective in the following sense. The classical case corresponds to $N=n=100$ and $k=2$, and substituting into the latter expression we have
\[
2^{-3\sqrt{200}\log_2100 -4\log_2 e/3} \sim 2\times 10^{-85}\;,
\]
which is much smaller than the exact probability $0.311$ calculated from the classical solution. Indeed, it remains an open problem to determine whether the probability $P(N/2,n)$ with varying $n$ tends to zero. To answer this question, we simulated the GS strategy for different values of $ N/2\le n < N$ confirming that indeed $P(N/2,n)$ is vanishingly small. Given our limited samples of all permutations, it would be clearly impossible to verify with precision the bound (to obtain a probability of $10^{-85}$ we would have to consider at least $10^{85}$ permutations and  verify that in at most one of those can $n$ players win with $N/2$ attempts). \par
In figure \ref{fig:Goyal-Saks} we show the case $N=100$ and $n=99$: it is easy to see that the strategy is not properly bounded, but only bounded, since for $a=N$, the P-function $P(N,w)$ never reaches 1 (see exact winners histogram in figure \ref{fig:Goyal-Saks}).

\section{Convergence to the random strategy}
\label{sec:5}
So far, we have analyzed and presented the P-functions for a variety of old and new strategies for the resolution of the generalized 100 prisoner problem, where not only the number of attempts $a$, but also the minimum number of winners $w$ can be varied. Although the numerical plots illustrate clearly the differences/similarities between certain strategies, in the section below we aim to quantify the absolute \emph{efficiency} of a strategy, as well as the \emph{error} between two strategies, by means of two estimators of our own making.

\subsection{Estimators of a strategy}
The first estimator we introduce here, called the efficiency $\eta$ of a strategy, measures the performance of a given strategy in terms of producing the exact number of winners within the right number of attempts. In fact, it is logical to assume that a strategy is more efficient than another if it produces more (equal) winners with the same (less) number of maximum attempts. 
 In other words, a more efficient strategy is one for which $P(a, w)$ tends to take high values for low values of $a$ and high values of $w$. If we multiply each value $P(a, w)$ by the function $(w/a)^{\beta}$ with $\beta>0$, it is easy to see that for an efficient strategy, the product takes high values in the correct region (low $a$, high $w$) of the $a$-$w$ plane whereas for a less efficient strategy this will not be the case. Hence, if we sum this product over all the combinations of $a$ and $w$, it should tell us which strategy is efficient on average. Thus, we have
\begin{equation}
\label{efficiency}
\eta=\frac{1}{C}\sum_{a,w} P(a,w)\cdot \Big(\frac{w}{a}\Big)^{\beta},
\end{equation}
where $C$ is a normalization constant which is fixed below to correspond to the value of $\eta$ for the random strategy (normalization constant), \eqref{eq:random_exact} for $n=N$ or \eqref{eq:random_str_n} for $n<N$. 
It should also be clear that small values of $\beta$ may not be able to differentiate strategies well enough since $(w/a)^{\beta}$ would take similar values all over the plane. We find that $\beta = 2$ leads to a good resolution of the strategies, and henceforth we will fix this value for $\beta$.\\
\begin{table}[h]
    \begin{minipage}[l]{5cm}
        \centering
        $n=100$
        \vspace{0.5em}

        \begin{tabular}{|c|c|c|}
        \hline
            {\bf Strategy} & {\bf Escape} & $\mathbf{\eta}$ \\
        \hline
            KS$^0$ & - & 1.35 \\
        \hline
            KS & RS & 1.00 \\
        \hline
            KS & BS & 1.00 \\
        \hline
            BS$1$ & - & 1.00 \\
        \hline
            BS$5$ & RS & 1.00 \\
        \hline
            BS$5$ & BS & 1.00 \\
        \hline
            Goyal-Saks & - & 1.35 \\
        \hline 
            ADI & - & 1.35 \\
        \hline
            RS & - & 1 \\
        \hline
        \end{tabular}

    \end{minipage}

    \begin{minipage}[r]{5cm}
        \centering

        \vspace{0.5em}
        $n=99$
        \vspace{0.5em}

        \begin{tabular}{|c|c|c|}
        \hline
            {\bf Strategy} & {\bf Escape} & $\mathbf{\eta}$ \\
        \hline
            KS$^0$ & RS & 1.21 \\
        \hline
            KS$^0$ & BS & 1.26 \\
        \hline
            KS & RS & 1.00 \\
        \hline
            KS & BS & 1.00 \\
        \hline
            BS$1$ & - & 1.00 \\
        \hline
            BS$5$ & RS & 1.00 \\
        \hline
            BS$5$ & BS & 1.00 \\
        \hline
            Goyal-Saks & - & 1.12 \\
        \hline 
            ADI$^{\ *}$ & - & 1.30 \\
        \hline
            RS & - & 1 \\
        \hline
        \end{tabular}
    \end{minipage}
    \begin{minipage}[r]{5cm}
        \centering

        \vspace{0.5em}
        $n=50$
        \vspace{0.5em}

        \begin{tabular}{|c|c|c|}
        \hline
            {\bf Strategy} & {\bf Escape} & $\mathbf{\eta}$\\
        \hline
            KS$^0$ & RS & 1.01 \\
        \hline
            KS$^0$ & BS & 1.04 \\
        \hline
            KS & RS & 1.00 \\
        \hline
            KS & BS & 0.99 \\
        \hline
            BS$1$ & - & 0.99 \\
        \hline
            BS$5$ & RS & 1.00 \\
        \hline
            BS$5$ & BS & 1.00 \\
        \hline
            Goyal-Saks & - & 0.99 \\
        \hline 
            ADI & - & 1.00 \\
        \hline
            RS & - & 1 \\
        \hline
        \end{tabular}
    \end{minipage}
    \vspace{1em}
    \caption{\label{eff_table} Efficiency of various strategies described in the paper for three different values of $n$. Note that, since the values are obtained from P-functions simulations using Monte Carlo sampling, they are not exact. Above, we show only the statistically meaningful decimals for the estimated efficiency. ($^{*}$Here the $\eta$ value is written for $n=98$ since ADI for $n=99$ is identical to $n=100$)}
\end{table} 
In Tables \ref{eff_table}, we indicate zero offset $D=0$ by adding a 0 index to the acronym for the strategy, e.g. KS$^0$. Also, since the box strategy requires the specification of a (constant) increment $I$, we will indicate it as BS$I$ in the tables.

We have quantified how `favorable' (more winners with less attempts) each of the strategies are, using the efficiency index $\eta$ \eqref{efficiency} which is always normalized by a RS, \eqref{eq:random_exact} for $n=N$ or \eqref{eq:random_str_n} for $n<N$. 
 From the table \ref{eff_table}, it is clear that the box strategy with an arbitrary increment is only as efficient as the random strategy. The same is true for the key strategy when players do not start by opening their own box. It is also evident from the table that as the number of empty boxes increases, all strategies P-functions converge to the random strategy P-functions \eqref{eq:random_str_n}. \par
\par
The second index $\epsilon_{ij}$ directly compares how much two P-functions differ from each other by taking the sum of the absolute differences between their values for each combination of $a$ and $w$, and then averaging these differences over all possible values of $a$ and $w$: 
\begin{equation}
\label{err}
\epsilon_{ij} =\frac{1}{(n+1)N}\sum\limits_{a, w} |P_i(a,w)-P_j(a,w)|\;,
\end{equation}
where the normalization factor is easily obtained since $w$ takes $n+1$ values $0, 1, 2, \cdots, n$ and $a$ takes $N$ values $1,2,\dots, N$. We note that the error $\epsilon_{ij}$ is related to the usual variational distance, given by
\begin{equation}
\delta(P_i,P_j) = \frac{1}{2}\sum_{\omega\in\Omega} |P_i(\omega) - P_j(\omega)|,
\end{equation}
where $\Omega$ denotes the sample space, and $P_i,P_j$ are probability measures, by the formula
\begin{align}
\epsilon_{ij} =\frac{1}{(n+1)N}\sum\limits_{a, w} |P_i(a,w)-P_j(a,w)| =  \frac{2}{(n+1)N} \sum_{k=1}^a \delta(P_i(k,\cdot), P_j(k,\cdot)).
\end{align}
In other words, the error $\epsilon_{ij}$ can be viewed as an average distance between two families of probability measures determined by two strategies. With this in view, we observe that other measurements of error can be defined through different choices of statistical distance $\delta$.
In Fig.~\ref{fig:errors} in appendix, we show $\epsilon_{ij}$ heatmap for different combinations of strategies presented in this paper. It is easy to see from these how strategies with similar (if not identical) efficiency also have small absolute global errors $\lessapprox 1\%$.

\subsection{Convergence to random}
\label{sec:conj}
Finally, to conclude our analysis we present a conjecture that is suggested by our simulations and results (see \cite{psg_package} for experiments). Consider the (discrete) space of all properly bounded strategies $\mathcal{S}$ which determines the space $\mathcal{P}$ of all corresponding P-functions. The error $\epsilon_{ij} = \epsilon_{ij}^{n,N}$  between two strategies $S_i$ and $S_j$ defined in \ref{err} depends on $n$ and $N$. It is straightforward to check that $\epsilon_{ij}$ defines a metric on $\mathcal{P}$ for every value of $n$ and $N$. Hence we present the following.
\begin{con}
\label{conj}
For any properly bounded strategy $S$, excluding ${\rm KS}^0$, its P-function converges to the P-function of the random strategy as $N$ grows large or as $n/N$ grows small, in particular
\begin{align}
\label{eq:conjecture}
\lim_{N\to \infty}\epsilon_{S,\mathrm{RS}} &=  0 \;, \qquad {\rm for \;} n = N,
\\
\label{eq:conjecture1}
\lim_{{n}/{N}\to 0}\epsilon_{S,\mathrm{RS}} &=  0 \;, \qquad {\rm for \; } n < N.
\end{align}
\end{con}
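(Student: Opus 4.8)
The plan is to reduce the statement to a single second-moment estimate for the number of winners and then verify that estimate for the two structurally different families of strategies that arise.

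\emph{Step 1: reduction to a variance bound.} Fix $a$ and write $X_S(a)=\sum_{i=1}^n Y_i$, where $Y_i=\mathbf{1}[\pi^{-1}(i)\in L_i]$ indicates that player $i$ finds her key within $a$ attempts under $S$, the only randomness being the uniform key placement $\pi$; by Proposition~\ref{randprob} the benchmark count $X_{\mathrm{RS}}(a)$ is $\mathrm{Bin}(n,a/N)$. For fixed $a$ the inner sum in \eqref{err} equals $2\,\delta(\mathrm{Law}\,X_S(a),\mathrm{Law}\,X_{\mathrm{RS}}(a))\le 2$, so the exact-winner error already obeys $\epsilon_{S,\mathrm{RS}}\le 2/(n+1)$; the genuine content of the conjecture (and the only reading under which $\mathrm{KS}^0$ is really excluded) is the same statement for the \emph{minimum}-winner functions, for which the inner sum equals the Kantorovich distance $W_1(\mathrm{Law}\,X_S(a),\mathrm{Law}\,X_{\mathrm{RS}}(a))$. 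Since $\epsilon_{S,\mathrm{RS}}^{\min}=\tfrac{1}{(n+1)N}\sum_a W_1(\cdots)\le\tfrac{1}{n+1}\max_a W_1(\cdots)$, it suffices to prove $\max_{1\le a\le N}W_1(\mathrm{Law}\,X_S(a),\mathrm{Law}\,X_{\mathrm{RS}}(a))=o(n)$ in each of the two stated limits. By the triangle inequality for $W_1$ and $W_1(\mu,\delta_{\mathbb{E}\mu})\le\sqrt{\mathrm{Var}\,\mu}$, together with $\mathrm{Var}\,X_{\mathrm{RS}}(a)\le n/4$, this follows from the two estimates $\mathbb{E}X_S(a)=na/N+o(n)$ and $\mathrm{Var}\,X_S(a)=o(n^2)$, uniformly in $a$. (When $n$ stays bounded one instead shows directly that all joint probabilities of the $Y_i$ factorize as $N\to\infty$, whence $W_1(a)\to 0$ for every $a$.)

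\emph{Step 2: key-independent strategies}---box strategies and their hybrids with a box or a random escape, for which the set $L_i$ of boxes opened by player $i$ is deterministic, or at least independent of $\pi$, with $|L_i|=a$. Here $\mathbb{E}Y_i=a/N$ on the nose, so $\mathbb{E}X_S(a)=na/N$ exactly, and $\mathrm{Cov}(Y_i,Y_j)=\dfrac{a^2-N\,\mathbb{E}|L_i\cap L_j|}{N^2(N-1)}$ for $i\neq j$. Summing, using $\sum_{i,j}|L_i\cap L_j|=\sum_b d_b^2$ with $d_b=\#\{i:b\in L_i\}$ and $\sum_b d_b=na$, Cauchy--Schwarz gives $\mathrm{Var}\,X_S(a)\le\dfrac{na(N-a)}{N(N-1)}$, which matches $\mathrm{Var}\,X_{\mathrm{RS}}(a)=\dfrac{na(N-a)}{N^2}$ up to the factor $N/(N-1)$ and is in particular $O(n)$, uniformly in $a$. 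This closes Step~1 for this family; for small $a$ a Chen--Stein argument upgrades the conclusion to total-variation convergence to the corresponding Poisson law, recovering the Corollary to Proposition~\ref{boxprob}.

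\emph{Step 3: genuinely key-dependent strategies}---the key strategy with $D\neq 0$ and an escape, the $n<N$ hybrids built on $\mathrm{KS}^0$ with an escape, and the ADI strategy. Now $L_i=L_i(\pi)$ is itself random: player $i$ walks along the functional graph of $\pi$ (or its partial analogue when $n<N$) and inserts a uniformly chosen escape box whenever the next box would repeat or be empty. The heuristic to exploit is that, precisely because the player does not start at box $i$ (or is driven off her initial segment by an empty box), the pointer segments she traverses before reaching $i$ do not single $i$ out, and each escape injects fresh uniform randomness, so the list of her first $a$ boxes is with probability $1-o(1)$ indistinguishable from a uniform random $a$-subset. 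Concretely: (a) couple the $S$-trajectory of a single player with a uniform random ordering of the boxes so that they agree on the first $a$ entries off an event of probability $o(1)$---using that the cycle of a fixed element is a uniform subset of uniformly random size and that each escape is exactly uniform on the unopened boxes---yielding $\mathbb{E}Y_i=a/N+o(1)$; and (b) extend the coupling to pairs $(i,j)$, obtaining $\mathrm{Cov}(Y_i,Y_j)=o(1)$ on average, e.g.\ by a switching argument on $\pi$ showing that conditioning on $Y_i$ perturbs the law of player $j$'s trajectory only by $o(1)$ in total variation. Combined with Step~1 this gives the conjecture; the limit $n/N\to 0$ only helps, since a uniformly chosen box is then empty with probability tending to $1$, so escapes occur frequently and the randomization is faster.

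\emph{Why $\mathrm{KS}^0$ is excluded, and the main obstacle.} For $\mathrm{KS}^0$ the winners of a given permutation are exactly the elements lying in cycles of length $\le a$, so for $a$ in a constant fraction of $[1,N]$ the $Y_i$ are strongly positively correlated (whole cycles win or lose together) and $\mathrm{Var}\,X_{\mathrm{KS}^0}(a)=\Theta(N^2)$---the single case in which the reduction of Step~1 breaks down, and the reason the strategy must be removed. The genuine difficulty is Step~3(b): the trajectories of two players are driven by the \emph{same} permutation and their escapes interact, so the pairwise covariance bound is not a routine single-walk estimate but requires quantifying how much one player's success distorts another's exploration---precisely the dependence that is absent for box strategies and maximal for $\mathrm{KS}^0$, and which the conjecture is really about. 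A fully rigorous partial result, consistent with the conjectural status of the statement, would be to carry out Steps~1--2 (the conjecture for all key-independent strategies) and leave the key-following escape strategies supported by the Monte Carlo evidence of Section~\ref{sec:4}.
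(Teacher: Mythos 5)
The statement you were asked to prove is Conjecture \ref{conj}, and the paper offers no proof of it: the only support given is Monte Carlo evidence, the exact formulas of Proposition \ref{boxprob} for $a=1,2$ together with the Poisson limit of the rencontres numbers, Riordan's asymptotics for $(a-1)<N^{1/3}$, and a verbal heuristic at the end of Section \ref{sec:3}. So there is no proof in the paper to compare yours against; what you have written is a genuine partial proof programme that goes beyond the paper. Two parts of it are real contributions. First, your observation in Step 1 that with the error exactly as defined in \eqref{err} one has $\sum_w|P_i(a,w)-P_j(a,w)|\le 2$ for every $a$, hence $\epsilon_{ij}\le 2/(n+1)$ for \emph{any} pair of strategies: this makes \eqref{eq:conjecture} essentially vacuous and fails to explain the exclusion of ${\rm KS}^0$, so the conjecture can only carry content for the minimum-winner P-functions, where the inner sum is the Kantorovich distance $W_1$ between the laws of the winner counts and can be of order $n$ (as it is for ${\rm KS}^0$ at $a=N/2$, where whole cycles win or lose together and ${\rm Var}\,X_{{\rm KS}^0}=\Theta(N^2)$). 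This is a correct and needed sharpening of what the conjecture asserts. Second, Step 2 is a complete and correct proof for every properly bounded strategy whose box choices do not depend on the keys found: the covariance identity, the reduction $\sum_{i,j}|L_i\cap L_j|=\sum_b d_b^2$ and Cauchy--Schwarz do give ${\rm Var}\,X_S(a)\le na(N-a)/(N(N-1))=O(n)$ uniformly in $a$, hence $\max_a W_1=O(\sqrt{n})$ and $\epsilon^{\min}_{S,{\rm RS}}=O(n^{-1/2})$. This covers all box strategies and their box- or random-escape hybrids uniformly in $a$, which is strictly more than the paper establishes for any $a\ge 3$.

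The gap is exactly where you say it is: Step 3. For the key strategy with $D\ne 0$ plus an escape, for the ${\rm KS}^0$ hybrids with empty boxes, and for ADI, the sets $L_i$ are functions of the permutation itself; the single-trajectory coupling in (a) and especially the pairwise switching argument in (b) are only sketched, and the assertion that conditioning on $Y_i$ perturbs player $j$'s trajectory by $o(1)$ in total variation is plausible but is precisely the statement that separates these strategies from ${\rm KS}^0$ --- nothing in your write-up (or in the paper) establishes it. Since you flag this yourself and propose the honest fallback of recording Steps 1--2 as theorems and leaving the key-following escapes to the numerical evidence of Section \ref{sec:4}, the fair summary is: a correct reformulation of the conjecture, a full proof of it in the key-independent case, and an open programme for the key-dependent case consistent with the statement's conjectural status.
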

To phrase \eqref{eq:conjecture} and \eqref{eq:conjecture1} differently, we also recall that a sequence of random variables $X_k$ is said to converge in distribution to $X$ if their associated cumulative distribution functions $F_k(x)$ converge to $F(x)$ for all $x$. In our case, where $X_k$ is associated to a strategy $S_k$, similar to \eqref{XRS}, we have for any fixed $a,n,$ and $N$,
\begin{equation}
F_k(w) = P(X_{k}\le w) = 1 - P^\text{min}_{S_k} (a,w+1).
\end{equation}
Then we may refine the conjecture to the statement that for any properly bounded strategy $S$ different from ${\rm KS}^0$, the associated random variable $S$ converges to $X_\text{RS}$ as $N$ grows large. Furthermore, even the ${\rm KS}^0$, for which the chances of winning the classic (constrained) problem were many orders of magnitude larger than the chances of winning using the random strategy, when considering the unconstrained problem treated is this paper, is only slightly ($\sim 25\%$) more efficient than the random strategy in the $a$-$w$ plane. 

\begin{rem}
Finally, we present informally an alternative formulation of the above conjecture, which does not rely on limits for $N $ or $n$. A nontrivial change in the algorithmic steps of a strategy $S$ (e.g. the value of $S$ on $B\times K$ for each prisoner $P_i\in P$) will move the point $S$ in $\mathcal{S}$. The closer two points $S$ and $S^\prime$ in $\mathcal{S}$, the smaller the differences (absolute value of the error or estimated efficiency) between the two strategies. Then we expect that there is a sequence of modifications of a strategy $S_k$ such that $S_k$ converges to the RS in distribution. Put differently, we can say that the random strategy acts as an \emph{attractor} in the space of all (properly bounded) strategies one can generate to solve the PSG. 
\end{rem}

\section{Future directions: rigged search games, storage strategies and memory}
\label{sec:6}
In this paper we discussed a multitude of inequivalent strategies to approach the unconstrained 100 prisoner problem, or PSG. Of course, any optimal retrieval process must clearly depend on the storage processes preceding it. Here, we analyzed the case of storage strategy in which $n$ keys are randomly distributed in $N$ boxes of capacity $0$ or $1$ (each box can contain at most one key) and show that all classes of strategies (except the key strategy with fixed initial conditions, $D=0$ and $n=N$) will achieve performances comparable, or converging to, the random strategy performances. However, as it turns out the key-strategy itself is not much more efficient than the random strategy, as it is easy to verify by computing the efficiency of a perfect God-strategy (see footnote $3$), $\eta \sim 5.6$. As expected, these results indicate that classical retrieval strategies for the unconstrained problem cannot improve by much the random strategy efficiency, since the storing process was fixed to be random.

%%%%%%%%%%%%%%%%CUT IF NEEDED %%%%%%%%%%%%%%%%%%%%%%%%%%%%%
On the other hand, the quantum (constrained) variant of the locker puzzle \cite{DB} has been shown to be trivial, as it allows for maximum winners with $100\%$ chances in all fair games for $a\ge \tfrac{\pi}{4}\sqrt{N}$. Based on recent discoveries \cite{memory1}, it would seem that neither the classical nor the quantum variant of this search game are fit to accurately describe memory retrieval processes in living organisms, but instead they constitute the two extreme mathematical abstractions of realistic memory retrieval processes. Natural organisms endowed with memory certainly do not store information in random fashion nor do they retrieve them randomly, and are most likely not constrained by quantum mechanical rules, at least at the collective level. Rather, the storage follows specific patterns, and consequently the same happens during retrieval processes. Furthermore, in various circumstances, memory retrieval processes are facilitated by external (or internal) cues (see for instance \cite{memory2}), which allow for the right neuronal connection to be turned on, producing memory recall.
%%%%%%%%%%%%%%%%CUT IF NEEDED %%%%%%%%%%%%%%%%%%%%%%%%%%%%%

The goal of this paper was to pave the way for the mathematically rigorous but biology-driven analysis of memory in living organism, animate or inanimate. Of course, to describe accurately biological memory processes one would need to consider further extensions or modifications of the PSG. For instance, crucial to resolve is the equivalent problem posed on storing strategies: given a fixed retrieval strategy, what is the storing strategy which optimizes the probability of (partial) information retrieval? Alternatively, is there a storing strategy which optimizes the probability of (partial) information retrieval, regardless of the retrieval strategy?

To answer these questions in general terms, it will be necessary to consider boxes with capacity $l>1$, i.e. each box can contain at most $l$ keys: this simple change will destroy the key-box cycle-structure created by randomly distributing keys in boxes all with exact capacity $1$, and create branching as well as overlapping cycles structures. Equivalently intriguing is the generalization whereby each prisoner $P_i$ looks for her key $i$, but also for other distinct keys $j,k,\dots$ which also open her cell door.\par
One would certainly expect these extended PSGs to possess novel, less-smooth P-functions, obviously related to the chosen storing strategy, which may be directly confronted to efficiency or performance of biological mnemonic processes.

\subsection*{Acknowledgements}
IL acknowledges two interviewers from Alibaba who accidentally made him aware of the problem. SMS acknowledges funding from the DST-INSPIRE Faculty Fellowship (DST/INSPIRE/ 04/2018/002664) by DST India.  TAW The was partially supported by NSF grant DMS-2212924.

%%%%%%%%%%%%%%%%%%%%%%%%%%%%%%%%%%%%%%%%%%%%%%%%%%%%%%%%
%%%%%%%%%%%%%%%%%%%%%%%%%%%%%%%%%%%%%%%%%%%%%%%%%%%%%%%%
\appendix
%%%%%%%%%%%%%%%%%%%%%%%%%%%%%%%%%%%%%%%%%%%%%%%%%%%%%%%%
%%%%%%%%%%%%%%%%%%%%%%%%%%%%%%%%%%%%%%%%%%%%%%%%%%%%%%%%

%%%%%%%%%%%%%%%%%%%%%%%%%%%%%%%%%%%%%%%%
\section{Unbounded (pure) random strategy}
\label{ref:appA}

In this appendix, we present the analytic P-function formula for the unbounded random strategy. This variant of the strategy (for $n=N$) presented in \ref{subsec:3.1} allows every prisoner to choose and open a box that was already opened. This could happen, for instance, if the prisoner suffers from short-term memory loss or if the amount of boxes she opened is so large that she would not remember them all.
At each box selection a prisoner has 1 chance of getting the right box with the right key, and $N-1$ chances of getting the wrong box. At the next selection, she again has $1$ chance of finding the right box, and again $N-1$ of failing. Instead of summing all chances that the prisoner gets her key in the first attempt, or the second, or multiple attempts, we can compute directly the chances that a prisoner does not obtain her key in $a$ attempts,
\begin{equation}
\Big(\frac{N-1}{N}\Big)^a.
\end{equation}
Consequently, the probability that a prisoner does get her key within $a$ attempts is easily obtained by subtracting the above number from the unity. If we consider  $N$ players and impose that exactly $w$ must be winners, we obtain the P-function for the pure random strategy:
\begin{equation}
P_{\rm pure R}(a,w)={{N}\choose{w}} \Big[1-\Big(\frac{N-1}{N}\Big)^a\Big]^w \,\Big[\Big(\frac{N-1}{N}\Big)^a\Big]^{N-w}.
\end{equation}
In figure \ref{fig:pure-random} we present the P-function for $N=100$. The minimum-winner P-function for the pure random strategy is everywhere smaller than (or equal to) the properly bounded random-strategy P-function, as expected from the discussion in section \ref{subsec:2.1}, but this inequality does not hold for the exact-winner P-function in the whole $a$-$w$ plane.\\
This is also confirmed by the estimation of the efficiency for this unbounded strategy, $\eta= 0.66$, the lowest efficiency value encountered throughout the analysis. 
\setcounter{equation}{0}
\begin{figure}[h!]
\begin{center}
\includegraphics[width=0.33\textwidth]{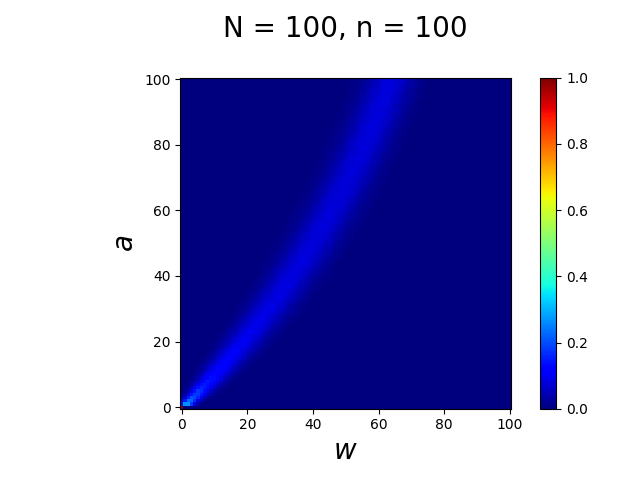} 
\includegraphics[width=0.33\textwidth]{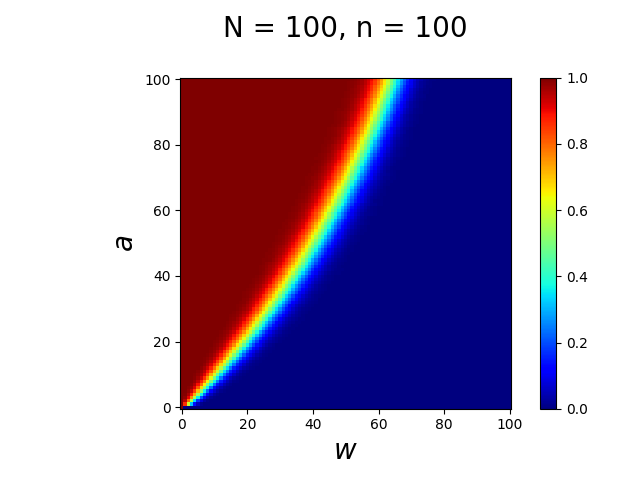} 
\end{center}
\caption{The P-functions for the pure-random strategy, the only example of unbounded strategy presented in this paper.}
\label{fig:pure-random}
\end{figure}

\section{Proofs of estimates}
\label{AppB}
In this appendix, we prove the formulas presented in the main text for the P-functions of the Key and Box strategies.
\begin{lem}
\label{lemma:omega}
The probability for all $N$ players to find their key within $a$ attempts, where $1\le a \le N$ is 
\begin{equation}
P_{\mathrm{KS}^0}(a,N)=1 - \sum_{k = a + 1}^N\frac{\Omega^N_k}{N!},
\end{equation}
where $\Omega^N_k$ is defined by \eqref{eq:gen_class_formula} and \eqref{eq:recursive_Omega} below.
\end{lem}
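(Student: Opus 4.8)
The plan is to reduce the statement to a precise count of the quantity $\Omega^N_k$, the number of permutations of $N$ elements whose cycle decomposition contains \emph{at least one} cycle of length exactly $k$. Once we have such a count, the lemma is almost immediate: under the key strategy $\mathrm{KS}^0$, player $P_i$ fails to find key $i$ within $a$ attempts if and only if $i$ lies on a cycle (of the permutation encoding the box--key assignment) of length strictly greater than $a$. Since for $k > \lfloor N/2 \rfloor$ at most one such long cycle can occur, the events ``the permutation contains a $k$-cycle'' for distinct $k > a$ are mutually exclusive as soon as $a \ge N/2$; more generally, \emph{at least one} player fails precisely when the permutation has \emph{some} cycle of length $> a$, so $1 - P_{\mathrm{KS}^0}(a,N)$ equals the probability that a uniformly random permutation has a cycle longer than $a$. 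The subtlety the excerpt already flags is that for $k \le N/2$ one may have $\alpha_k \ge 2$, so $\sum_{k=a+1}^N \Omega^N_k / N!$ is \emph{not} in general a sum of probabilities of disjoint events, but it \emph{is} still the correct complementary count because a permutation with a cycle of length $> a$ is counted once for each distinct long cycle length it realizes — and here is the point — when $a+1 > N/2$ there is at most one long cycle, and when $a+1 \le N/2$ we must instead argue via inclusion--exclusion or, more cleanly, observe that ``no cycle of length $> a$'' is the event whose probability we actually want, and recompute. I would therefore first establish the clean identity
\[
P_{\mathrm{KS}^0}(a,N) = \mathbb{P}\big[\text{a uniform random permutation of } [N] \text{ has all cycles of length } \le a\big],
\]
and then devote the bulk of the proof to expressing the complement through the $\Omega^N_k$.

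The key computational step is deriving the recursion for $\Omega^N_k$ referenced as \eqref{eq:gen_class_formula} and \eqref{eq:recursive_Omega}. I would obtain a base formula by a direct ``mark one $k$-cycle'' count: choose the $k$ elements (${N \choose k}$ ways), arrange them into a cycle ($(k-1)!$ ways), and permute the rest arbitrarily ($(N-k)!$ ways), giving the over-count $\binom{N}{k}(k-1)!(N-k)! = N!/k$ already recorded in \eqref{eq:class_count}. This over-counts permutations with $\alpha_k \ge 2$ exactly $\binom{\alpha_k}{1}$-fold in the naive version; to get the genuine count of permutations having $\alpha_k \ge 1$, I would peel off the contribution of permutations in which the marked $k$-cycle is the \emph{only} occurrence by recursing on the complementary $N-k$ elements: a permutation of $[N]$ containing at least one $k$-cycle either has its ``first'' $k$-cycle (in some fixed ordering of $k$-subsets) as the unique $k$-cycle among the remaining elements, or it has a $k$-cycle there too. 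This yields an inclusion--exclusion / recursive relation of the shape
\[
\Omega^N_k = \frac{N!}{k} - \binom{N}{k}(k-1)!\,\Omega^{\,N-k}_k,
\]
or an equivalent sign-alternating closed form $\Omega^N_k = N! \sum_{j\ge 1} (-1)^{j+1} / (j!\, k^j)$, which I would verify by induction on $\lfloor N/k \rfloor$. Feeding this into the disjointification argument for $k > a \ge N/2$ — where every long cycle length is realized at most once — gives $1 - P_{\mathrm{KS}^0}(a,N) = \sum_{k=a+1}^N \Omega^N_k / N!$ directly; for $a < N/2$ the same formula persists because the ``no long cycle'' event is the complement of a \emph{disjoint} union only after one restricts to the genuinely disjoint regime, and below that the inclusion--exclusion corrections are exactly what the definition of $\Omega^N_k$ (counting permutations, not marked cycles) already absorbs. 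I would state this carefully rather than wave at it.

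The main obstacle I anticipate is precisely this bookkeeping for $a < \lfloor N/2 \rfloor$: one must be sure that $\sum_{k=a+1}^N \Omega^N_k/N!$ equals $\mathbb{P}[\exists\, \text{cycle of length} > a]$ and not some over- or under-count, since for small $a$ a single permutation can contribute simultaneously long cycles of several lengths, \emph{and} can have several cycles of the same length $k \le N/2$ — but the latter is irrelevant here since all the $k$ in the sum satisfy $k \ge a+1$, and two disjoint cycles of length $\ge a+1$ require $2(a+1) \le N$, which is possible. So the honest argument is: let $A_k$ be the event ``contains a cycle of length exactly $k$'', $k > a$; then $\mathbb{P}[\bigcup_{k>a} A_k]$ is what we want, the $A_k$ are pairwise disjoint iff $a+1 > N/3$ is \emph{not} enough — two cycles of lengths $k \ne k'$ both $> a$ need $k + k' \le N$ — so in general I must use Bonferroni/inclusion--exclusion, and then show the higher-order terms telescope against the recursive definition of $\Omega^N_k$ so that the final answer collapses to the single sum $\sum_{k=a+1}^N \Omega^N_k/N!$. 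Making that collapse transparent — ideally by choosing the ``right'' definition of $\Omega^N_k$ (counting permutations with $\alpha_k \ge 1$, with the recursion handling all multiplicities internally) so that no external inclusion--exclusion is needed — is the crux, and is where I would spend the most care.
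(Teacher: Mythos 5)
Your reduction of the lemma to the statement that $P_{\mathrm{KS}^0}(a,N)$ is the probability that a uniform permutation has all cycles of length at most $a$ is correct and is also the (implicit) starting point of the paper's proof. Beyond that, however, the proposal has two genuine problems. First, the intermediate recursion you write, $\Omega^N_k = N!/k - \binom{N}{k}(k-1)!\,\Omega^{N-k}_k$, does not compute $\Omega^N_k$ (the number of permutations with at least one $k$-cycle): the marked-cycle count $N!/k$ equals $\sum_{j\ge1} j\,\Omega^N_{\alpha_k=j}$ while the subtracted term equals $\sum_{j\ge2} j\,\Omega^N_{\alpha_k=j}$, so the difference is $\Omega^N_{\alpha_k=1}$, the number of permutations with \emph{exactly} one $k$-cycle (for $N=4$, $k=2$ it gives $6$ rather than the correct $9$). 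The alternating closed form you quote is right, but it does not follow from that recursion. The paper instead obtains $\Omega^N_k$ by summing the exact-multiplicity counts $\Omega^N_{\alpha_l=i}$ via \eqref{eq:def_Omega} and the recursion \eqref{eq:recursive_Omega}, which is a different (and correct) bookkeeping.

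Second, and more seriously, the step you yourself identify as the crux --- that $\sum_{k>a}\Omega^N_k/N!$ equals the probability of having some cycle longer than $a$ --- is left unproved, and no ``telescoping against the recursive definition'' can rescue it: a permutation containing long cycles of two \emph{distinct} lengths $k\neq k'$, both exceeding $a$, is counted once in $\Omega^N_k$ and once in $\Omega^N_{k'}$, and this occurs as soon as $(a+1)+(a+2)\le N$. Concretely, for $N=5$ and $a=1$ one has $\Omega^5_2+\Omega^5_3+\Omega^5_4+\Omega^5_5=45+40+30+24=139>5!$, so the displayed formula returns a negative number, whereas the true value of $P_{\mathrm{KS}^0}(1,5)$ is $1/120$. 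The single-sum identity therefore holds only in the regime where at most one cycle length greater than $a$ can be realized (in particular for $a\ge N/2$, which covers the classical case); outside that regime a genuine inclusion--exclusion over sets of long cycle lengths is required. You should know that the paper's own proof is silent on exactly this point --- it only corrects the over-count \emph{within} a fixed length $k$ (the case $\alpha_k\ge 2$) --- so your diagnosis of where the difficulty lies is sharper than the paper's treatment; but a diagnosis is not a repair, and as written the proposal does not close the gap.
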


\begin{proof}
Call $r=\lfloor N/l\rfloor$, so $r\ge 1$ always and in \eqref{eq:class_count} $r=1$. By definition, for $r\ge 1$:
\begin{equation}
\label{eq:def_Omega}
\Omega^N_l=\sum_{i=1}^r \Omega^N_{\alpha_l=i}, \qquad \Omega^N_{\alpha_l=0}=N!-\Omega^N_l
\end{equation}
where $\Omega_{\alpha_l=i}$ is the number of permutations which contain exactly  $i$ many $l$-cycles. Now, a known formula counts the number of permutations which have a specific cycle decomposition or partition of cycles $p=(\alpha_1,\dots,\alpha_N)$, equation \eqref{eq:perm_cycle_count}.
This means that if all partitions of $N$ containing a certain number of $l$-cycles are known, a repeated use of the \eqref{eq:def_Omega} gives us $\Omega^N_l$. However, the number of partitions grows rapidly with $N$ so the brute-force approach is not feasible.  To solve this problem, we proved an identity which counts the number of permutations containing a precise number $k$ of $l$-cycle, i.e. $\alpha_l=k$, in terms of the number of permutations containing exactly $k-k_1\ge 0$ and $k_1$ $l$-cycles respectively:  
\begin{equation}
\label{eq:recursive_Omega}
\Omega^N_{\alpha_l=k}=\frac{{N \choose k_1\cdot l}}{{k\choose k_1}}\Omega^{k_1\cdot l}_{\alpha_l=k_1}\,\Omega^{N-k_1\cdot l}_{\alpha_l=k-k_1}
\end{equation}
valid for all values of $N$, $k>0$ and for all $k_1$ such that $k\ge k_1\ge 1$ (in fact we obtain a trivial identity for $k_1=0$). In the above, the binomial coefficients count all the ways $k_1\cdot l$ elements can be chosen from N elements and the number of ways $k_1$ $l$-uples can be chosen among $k$. Note also that \eqref{eq:class_count} is a special case of \eqref{eq:recursive_Omega}.
Of course, some values of the factors $\Omega$ can be easily computed by using \eqref{eq:perm_cycle_count} or simple definitions, such as:
\begin{align}
\Omega^{k_1\cdot l}_{\alpha_l=k_1}=\frac{k_1\cdot l!}{l^{k_1} k_1!}, \qquad \Omega^l_{\alpha_l=1}=(l-1)!, \qquad \Omega^{N-l}_{\alpha_l=0}=(N-l)! \quad\text{for } l>N-l,
\label{eq:simple_rel}
\end{align}
where the last formula extends the result \eqref{eq:recursive_Omega} to the case $k=0$, but only whenever it is not possible to create an $l$-cycle among $N-l$ elements, simply because there are not enough elements available. In all other circumstances, $\Omega^N_{\alpha_l}=0$ can only be computed indirectly from the second of \eqref{eq:def_Omega}. It is a simple bookkeeping exercise to show that the above formula \eqref{eq:class_count} can be easily obtained from  \eqref{eq:recursive_Omega} by making use of the last two equations in \eqref{eq:simple_rel}.

We can now derive the generalization of \eqref{eq:class_count} to the cases $l<N/2$, i.e. $r\ge 2$: from both \eqref{eq:def_Omega}, using \eqref{eq:recursive_Omega} and fixing $k_1=1$ (to account for the permutations of $N$ containing only $1$ $l$-cycle),  we get
\begin{align}
\Omega^N_l&={N\choose l} \Omega^{l}_{\alpha_l=1}\sum_{i=1}^r\, \frac{\Omega^{N-l}_{\alpha_l=i-1}}{{i \choose 1}}
\nonumber\\
&=\frac{N!}{l\cdot (N-l)!}\Big(\Omega^{N-l}_{\alpha_l=0} + \sum_{i=2}^r \frac{\Omega^{N-l}_{\alpha_l=i-1}}{i}\Big)
\nonumber\\
&=\frac{N!}{l\cdot (N-l)!}\Big[(N-l)!-\sum_{i=1}^{r-1}\Omega^{N-l}_{\alpha_l=i}+\sum_{i=2}^r \frac{\Omega^{N-l}_{\alpha_l=i-1}}{i}\Big]
\nonumber\\
\label{eq:gen_class_formula}
&=\frac{N!}{l\cdot (N-l)!}\Big[(N-l)!-\sum_{i=1}^{r-1}\frac{i}{i+1} \Omega^{N-l}_{\alpha_l=i}\Big].
\end{align} 
Clearly, if $r=1$ the above formula reduces to \eqref{eq:class_count}, whereas if $r> 1$, namely $l< N$, the recursive use of \eqref{eq:recursive_Omega} is convenient to obtain $\Omega^{N-l}_{\alpha_l=i}$.
\end{proof}
For the Box strategy, we first show that the P-function satisfies the same reflection-symmetry property of the random strategy.
\begin{lem}
\label{lem:box}
 For any $1\le a \le N$, $0\le w \le N$, and $I$ coprime to $N$, the box-strategy exact-winner P-function satisfies:
\begin{equation} 
\label{eq:symm_box}
P_\mathrm{BS}(a,w)=P_\mathrm{BS}(N-a,N-w).
\end{equation}
\end{lem}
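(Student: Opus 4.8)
The plan is to set up an explicit probability-preserving bijection on the underlying sample space (permutations of $\{1,\dots,N\}$, each occurring with probability $1/N!$) that carries configurations with exactly $w$ winners under $a$ attempts to configurations with exactly $N-w$ winners under $N-a$ attempts. First I would make the winning condition for a single player combinatorially explicit: with offset $D_i=0$ and increment $I$ coprime to $N$, player $P_i$ opens the boxes $i,\ i+I,\ i+2I,\dots,\ i+(a-1)I$ (all mod $N$). So $P_i$ wins within $a$ attempts precisely when, for some $0\le t\le a-1$, the box $i+tI$ contains key $i$; that is, writing $\pi$ for the permutation sending each box to the key it contains, $P_i$ wins iff $\pi^{-1}(i)\in\{i, i+I,\dots, i+(a-1)I\}$, equivalently $\pi(\text{box}) = i$ for one of those boxes. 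It is cleaner to relabel along the arithmetic progression: since $I$ is invertible mod $N$, the map $x\mapsto I^{-1}x \bmod N$ conjugates the whole situation to the increment-$1$ case, so without loss of generality $I=1$ and $P_i$ wins iff $\pi^{-1}(i)-i \bmod N \in \{0,1,\dots,a-1\}$.

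Next I would introduce the auxiliary permutation $\sigma$ defined by $\sigma(i) = \pi^{-1}(i) - i \bmod N$ being replaced conceptually by the ``displacement'' statistic; concretely, define for a permutation $\pi$ the displacement $\delta_\pi(i) = (\pi^{-1}(i) - i)\bmod N \in \{0,1,\dots,N-1\}$. Then the number of winners under $a$ attempts is $\#\{i : \delta_\pi(i) \le a-1\}$. The key observation is that the map $\pi \mapsto \pi'$, where $\pi'$ is chosen so that $\delta_{\pi'}(i) = (N-1) - \delta_\pi(i)$ for all $i$ — i.e. reflect every displacement — is a bijection on the symmetric group. One way to realize it: let $\rho$ be the ``reversal'' permutation and compose on the appropriate side so that the box holding key $i$ moves from position $i + \delta$ to position $i + (N-1-\delta)$; checking that the resulting assignment is again a bijection (a permutation) is the content of the argument. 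Under this involution, a player with displacement $\delta \le a-1$ is sent to a player with displacement $N-1-\delta \ge N-a$, hence a \emph{loser} under $N-a$ attempts, and conversely a loser under $a$ attempts (displacement $\ge a$) is sent to a winner under $N-a$ attempts. Therefore $w$ winners for $(a)$ corresponds bijectively to $N-w$ winners for $(N-a)$, and since the bijection preserves the uniform measure, $P_{\mathrm{BS}}(a,w) = P_{\mathrm{BS}}(N-a,N-w)$.

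The main obstacle I anticipate is the careful verification that the displacement-reversal map is genuinely a well-defined bijection on $S_N$ — i.e. that ``reverse every displacement'' does not produce collisions. Displacements are not independent across $i$; they are the constrained statistics of a permutation, so one cannot freely prescribe them. I would handle this by exhibiting the map concretely rather than abstractly: note that the box strategy's win/loss pattern for $P_i$ depends on $\pi$ only through which of the boxes $i,i+1,\dots$ (cyclically) contains key $i$, and reverse this by sending the permutation $\pi$ to $w\mapsto -\pi(-w) $ up to the right shift, i.e. conjugating $\pi$ by the reflection $x\mapsto c-x \bmod N$ for a suitable constant $c$, possibly combined with $\pi\mapsto\pi^{-1}$. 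One then checks (a short computation, not to be ground out here) that conjugation by a reflection negates each displacement, and that such conjugation is visibly a bijection on $S_N$ preserving uniform measure. An alternative, entirely avoiding the reversal map, is to invoke the already-proved random-strategy symmetry \eqref{eq:symmetry} together with the paper's claim that box strategies are sub-cases of the random strategy; but that is heuristic, so the bijective proof above is the rigorous route. A final bit of care is needed at the boundary: the condition $1\le a\le N$ and $0\le w\le N$ means $N-a$ ranges over $0,\dots,N-1$, so one should either interpret $a$ and $N-a$ cyclically (the strategy with $a=N$ attempts and the ``$0$ attempts'' degenerate case both being trivial — all winners vs. all losers) or simply state the identity for $1\le a\le N-1$, which is all that the symmetry assertion needs.
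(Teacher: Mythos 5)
Your proof is essentially correct but takes a genuinely different route from the paper's. The paper keeps the permutation fixed and complements the set of players: it observes that the $N-w$ players who fail in $a$ attempts from offset $D$ are exactly those who would succeed in $N-a$ attempts from offset $D'=D+aI$ (they find their key among the boxes left unopened), giving $P^{D}_{\mathrm{BS}}(a,w)=P^{D'}_{\mathrm{BS}}(N-a,N-w)$, and then appeals to offset-independence of the box-strategy P-function --- a fact the paper only argues informally at the end of the section. You instead build a measure-preserving bijection on $S_N$ itself that reverses the displacement statistic $\delta_\pi(i)=(\pi^{-1}(i)-i)\bmod N$, which, once completed, is self-contained and does not lean on the unproved offset-independence; that is a real advantage of your approach. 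The one detail you defer (``a short computation, not to be ground out here'') is not optional: pure conjugation by a reflection $x\mapsto c-x$ yields $\delta_{\pi'}(i)=-\delta_\pi(c-i)\bmod N$, which fixes $\delta=0$ and therefore does \emph{not} exchange winners and losers at the boundary (a player who wins on her first attempt would still win). You must compose with a unit translation, e.g.\ $\pi'^{-1}(i)=c-1-\pi^{-1}(c-i)$, to get $\delta_{\pi'}(i)=N-1-\delta_\pi(c-i)$; this is still a composition of bijections on $S_N$, so the argument closes. You are right that the naive pointwise prescription $\delta\mapsto N-1-\delta$ cannot be imposed directly (displacements are constrained), and your boundary remark about $a=N$ versus $N-a=0$ attempts is a point the paper silently skips.
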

\begin{proof}
If $I$ is coprime to $N$, the box strategy is properly bounded. Hence, any prisoner $P_i$, given $N$ attempts would open all $N$ boxes, in the order $(i+D){\rm mod\;}N,(i+D+I){\rm mod\;}N,(i+D+2I){\rm mod\;}N,\dots,(i+D+(N-1)I){\rm mod\;}N$, with $D$ arbitrary offset. Now, for every permutation such that $w$ players $P_j$ are winners in $a$ attempts, i.e. find their key in one of the boxes $(j+D){\rm mod\;}N,(j+D+I){\rm mod\;}N,,\dots,(j+D+(a-1)I){\rm mod\;}N$, there are $N-w$ players $P_l$ which did not find their key in $a$ attempts, but would have certainly found their key in one of the unopened boxes $(l+D+aI){\rm mod\;}N,(l+D+(a+1)I){\rm mod\;}N,\dots,(l+D+(N-1)I){\rm mod\;}N$. Hence, the players $P_l$ would have won, i.e. would have found their key, if they used the box strategy with offset $D^\prime=D+aI$, same increment $I$ and also had at their disposal $N-a$ attempts. We then arrive at the equality $P^D_{\rm BS}(a,w)=P^{D^\prime}_{\rm BS}(N-a,N-w)$. Finally, because the P-functions of the box strategy are independent of $D$, as we shall argue at the end of the section, the identity \eqref{eq:symm_box} follows.   
\end{proof}
\par It should be quite clear why the same property will not hold for the key-strategy. In that case, in fact, no player can know, before playing, which box(es) she will open for any given attempt $a>1$. If $w$ players find their key in $a$ attempts, the remaining $N-w$ players could not now a priori the box to start from, hence would not have certainly found their cycle, and hence their key.

\begin{proof}[\bf Proof of Proposition \ref{boxprob}]
\label{proof:prop3.4}
We first consider $a=1$. This case corresponds to the ``probl\`{e}me des rencontres'': we want to evaluate the number of permutations of $N$ such that no players will find their key in its box. 
If $k$ elements are fixed in a position, there are exactly $(N-k)!$ permutations of the remaining elements in the remaining positions. Furthermore,  there are ${N}\choose{D}$ ways to select those elements. 
Hence, from the inclusion-exclusion principle,\footnote{This principle was in fact firstly used to find a solution to the ``probl\`{e}me des rencontres''  \cite{riordan}} we get:
\begin{equation}
\label{eq:Dn0}
D_{N,0}=\sum_{k=0}^N (-1)^k {N\choose k}\; (N-k)! = N!\sum_{k=0}^N \frac{(-1)^k}{k!},
\end{equation}
where the index $0$ indicates that all elements possess the property $a$, or no element possesses the property $\bar{a}$. This formula is well-known and counts the number of ``derangements'' $D_{N,0}$, permutations with no trivial cycles. It can easily be generalized: if we are interested only in $N-w$ elements satisfying the property $a$, or equivalently $w$ elements satisfying $\bar{a}$ (and hence permutations with $w$ trivial cycles), we then obtain:
\begin{equation}
\label{eq:Dnw}
D_{N,w}={N\choose w} D_{N-w,0}={N\choose w}\,(N-w)!\sum_{k=0}^{N-w}\frac{(-1)^k}{k!}=  {N\choose w}\,!(N-w),
\end{equation}
where the symbol $!p$ indicates the derangements and the sum has been truncated to reflect the fact that now we want at most $(N-w)$-uples of elements to satisfy the condition $a$ at the same time. We have also multiplied by a binomial coefficient, which simply counts the $w$-uples of elements falling in their positions, also called the ``hits'', within $N$ total elements. Now it is straightforward to obtain the explicit formula for the probability for $w$ players to find their key at the first attempt 
\begin{align}
P_\mathrm{BS} (1,w) &= \frac{D_{N,w}}{N!}={N \choose w}\frac{(N-w)!}{N!}\sum_{k=0}^{N-w}\frac{(-1)^k}{k!}
\nonumber\\
\label{eq:derange1}
&= \frac{1}{w!}\sum_{k=0}^{N-w}\frac{(-1)^k}{k!}
\end{align}
as desired. Note that, from \eqref{eq:symm_box} we get:
\begin{equation}
P^N_{\rm BS}(N-1,N-w)=  \frac{1}{(N-w)!} \sum_{k=0}^{w}\frac{(-1)^k}{k!}\;.
\end{equation}

It is also instructive to derive the above formula from the generating polynomial of derangements \cite{riordan},
\begin{equation}
\sum_{k=0}^N \frac{N!}{k!} (x-1)^k.
\end{equation}
The coefficient of the term $x^w$ enumerates the number of permutations such that $w$ players have hits, i.e. they find their key in their-number box in one attempt. By expanding the polynomial, using the fact that the diagonals of the Pascal's triangle are given by binomials \footnote{To be more precise, consider the $i^{\rm th}$ elements on the left (right) edge of the Pascal's triangle. The elements $j^{\rm th}$ lying on the diagonal starting from the element, then reaching the lower right (left) successive elements is simply given by ${i}\choose{j-1}$}, we get:
\begin{equation}
\label{eq:derange2}
P^N_{\rm BS}(1,w)=\frac{1}{N!}\sum_{k=w}^N (-1)^{k-w}\frac{N!}{k!} {k\choose w},
\end{equation}
which is equal to \eqref{eq:derange1} after a few simplifications and a shift in the range of $k$.

Next, we consider the case $a=2$. This case is complementary to the ``probl\`{e}me des m\'{e}nages'' \cite{2disc} formulated as: 
{What is the number of permutations such that no element $i$ is found in position $(i,i+1)\bmod N$ (meaning that the last element $N$ cannot be in position $N$ or $1$)?}

We can derive the formula directly from the known generating polynomial \cite{riordan}:
\begin{equation}
\sum_{k=0}^N \frac{2N}{2N-k}{2N-k \choose k} (N-k)! (x-1)^k.
\end{equation}
As before, the coefficient $C_{N,w}$ of the term $x^w$ enumerates the number of permutations such that $w$ players have hits, i.e. find their key in their-number box or in the following one (clearly no overlap is allowed, each box contains 
one and only one key, so no two players can find their key in the same box). The probability then reads
\begin{equation}
P^N_{\rm BS}(2,w)=\frac{C_{N,w}}{N!}=\frac{1}{N!}\sum_{k=w}^N (-1)^{k-w} \,\frac{2N}{2N-k}{2N-k \choose k}(N-k)! {k \choose w}
\end{equation}
as desired. It is easy to see the similarities with \eqref{eq:derange2}.
\end{proof}
\newpage
\section{Hybrid strategies P-functions plots}
In this appendix we present the plots for the P-function of the hybrid strategies discussed in section \ref{sec:4}\\
\begin{figure}[h!]
\begin{center}
\includegraphics[width=0.33\textwidth]{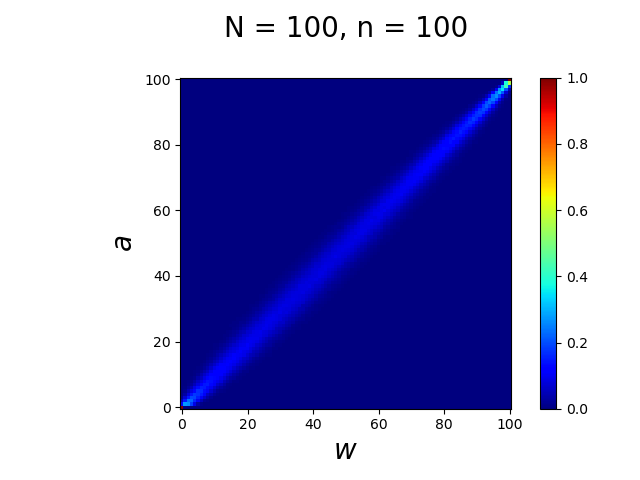} 
\includegraphics[width=0.33\textwidth]{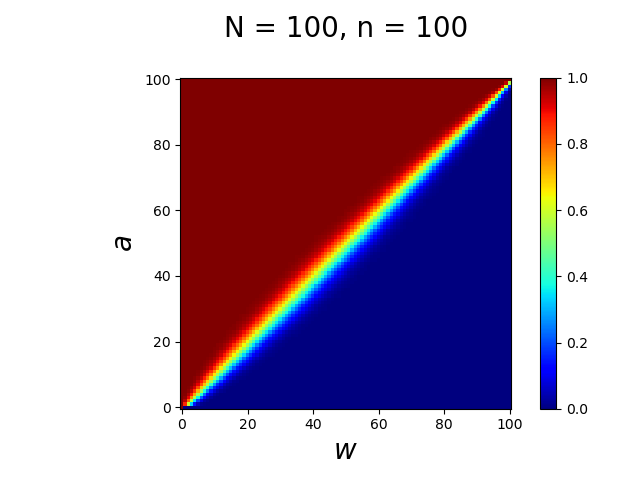} 
\end{center}
\caption{The key strategy P-function with $D\neq 0$, made properly bounded by the use of the random strategy as an escape, $E=R$. Note that the graph does not change if the value of the offset is different, or not constant for different prisoners, and also if the escape strategy is not the random but the box strategy $E=B$.}
\label{fig:key+rand}
\begin{center}
\includegraphics[width=0.33\textwidth]{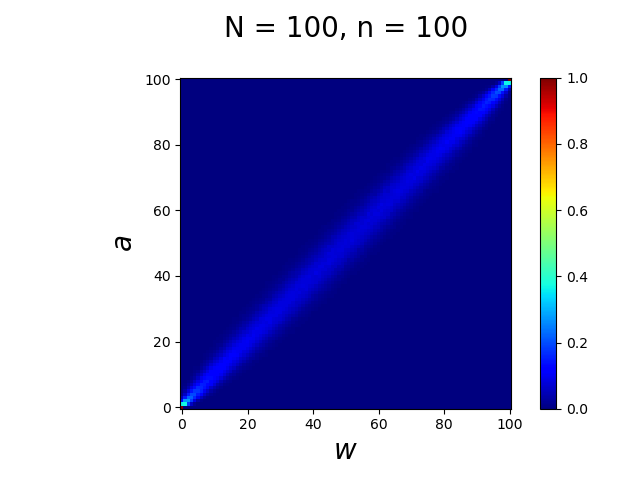} 
\includegraphics[width=0.33\textwidth]{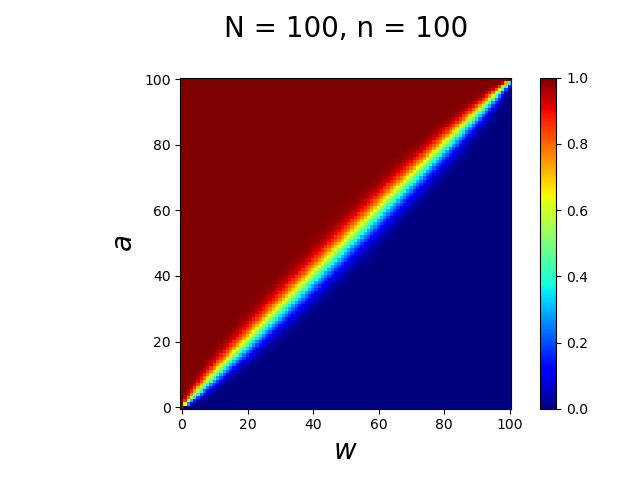} 
\end{center}
\caption{The box strategy P-function for ${I}=5$ (not coprime with $N=100$) and $D=0$, made properly bounded by the use of the random strategy as an escape. Note that the graph does not change for a different choice of the offsets or if the escape strategy chosen is the box strategy with a different increment.}
\label{fig:box+rand}
\end{figure}
\begin{figure}[h!]
\begin{center}
\includegraphics[width=0.33\textwidth]{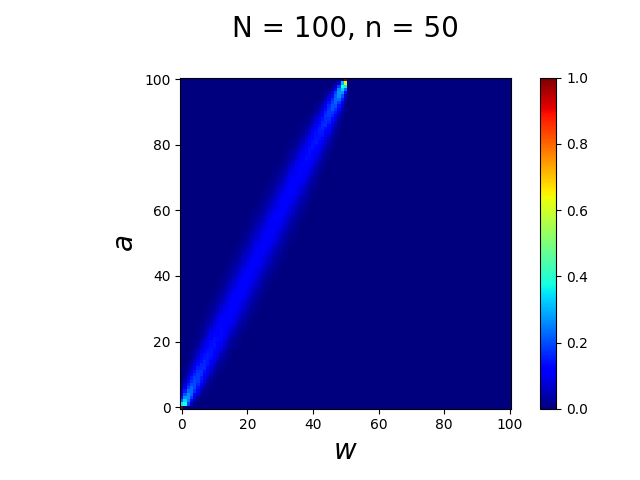} 
\includegraphics[width=0.33\textwidth]{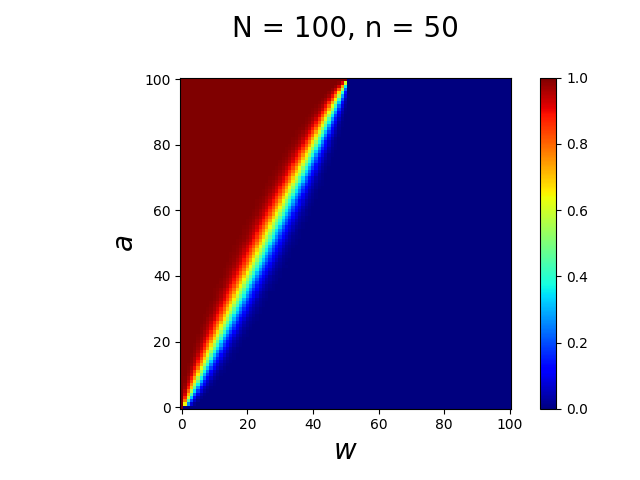}
\end{center}
\caption{The properly-bounded random-strategy P-functions for $n<N$}
\label{fig:random_n}
\end{figure}

\begin{figure}[t!]
\begin{center}
\includegraphics[width=0.33\textwidth]{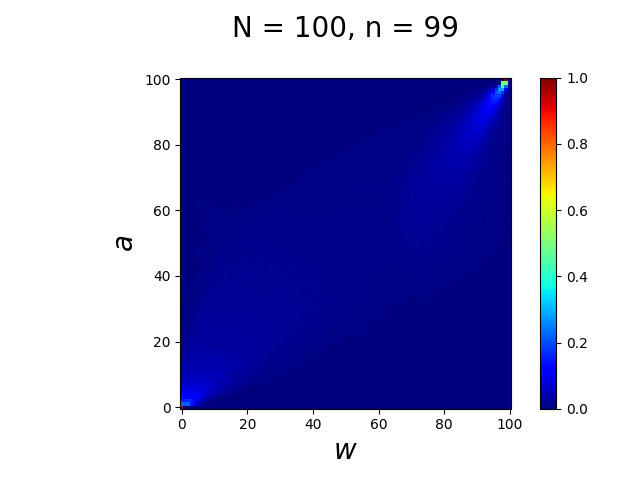} 
\includegraphics[width=0.33\textwidth]{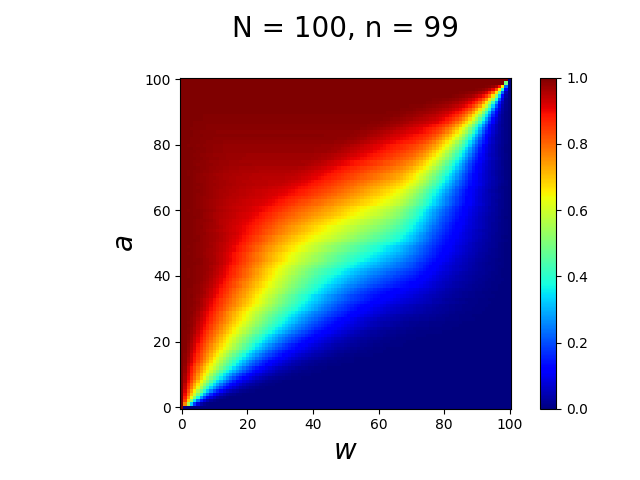} 
\end{center}
\caption{P-functions of the key strategy with ${D}=0$ and escape $E=R$.}
\label{fig:key+random_offset0}
\begin{center}
\includegraphics[width=0.33\textwidth]{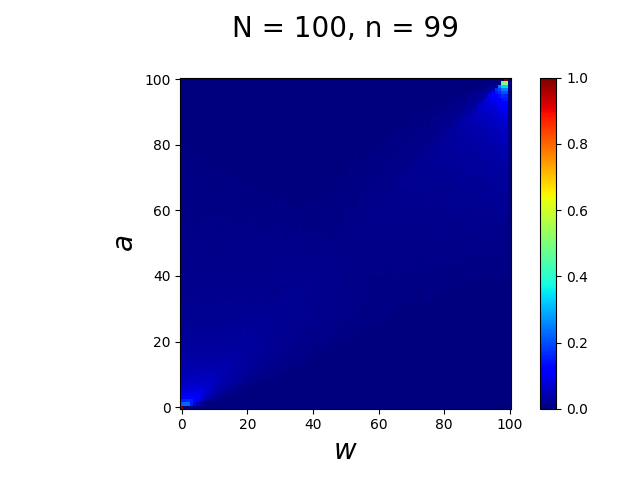} 
\includegraphics[width=0.33\textwidth]{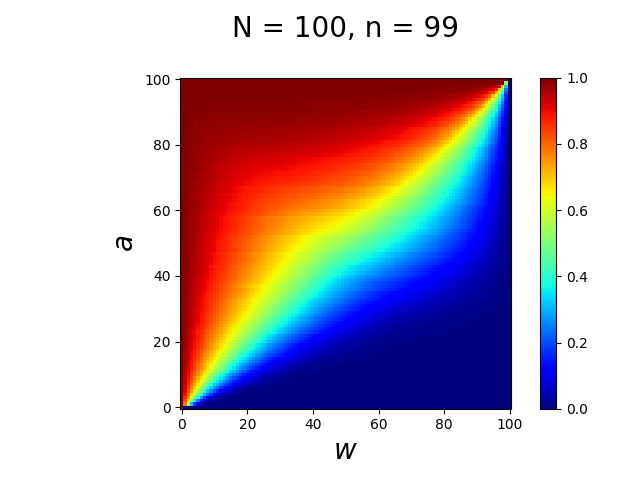} 
\end{center}
\caption{P-functions of the key strategy with ${D}=0$ and escape $E=B$.}
\label{fig:key+box_offset0}
\begin{center}
\includegraphics[width=0.33\textwidth]{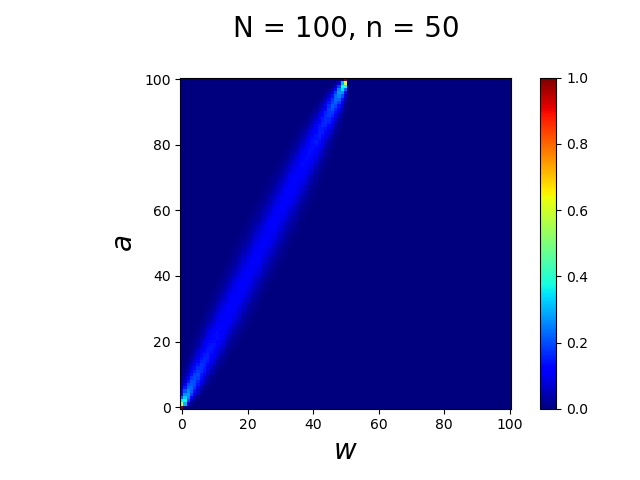} 
\includegraphics[width=0.33\textwidth]{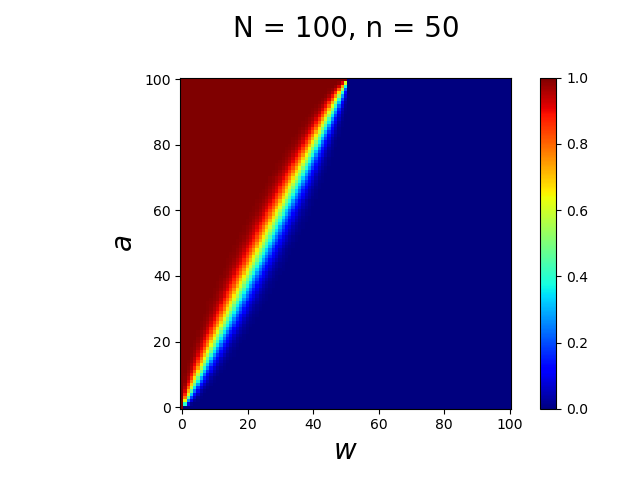} 
\end{center}
\caption{The unbounded box-strategy P-function with ${D}=2$ and ${I}=5$ (not coprime to $N=100$), made properly bounded by the use of the random strategy as an escape, $E=R$. Note that the graph does not change for a different choice of the offsets or escape route.}
\label{fig:box+random}
\vspace{0.3cm}
\begin{center}
\includegraphics[width=0.33\textwidth]{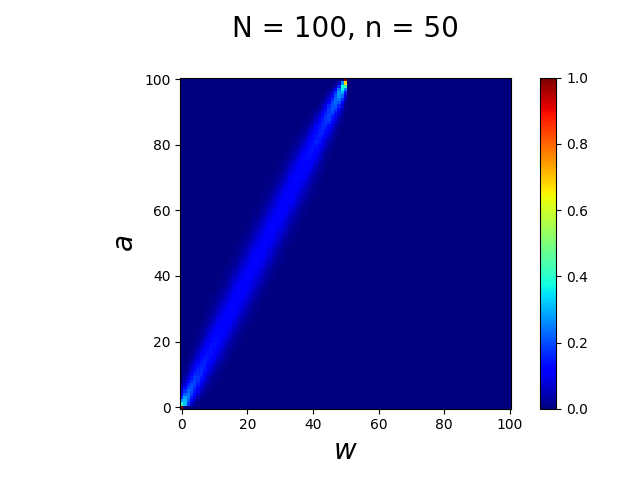} 
\includegraphics[width=0.33\textwidth]{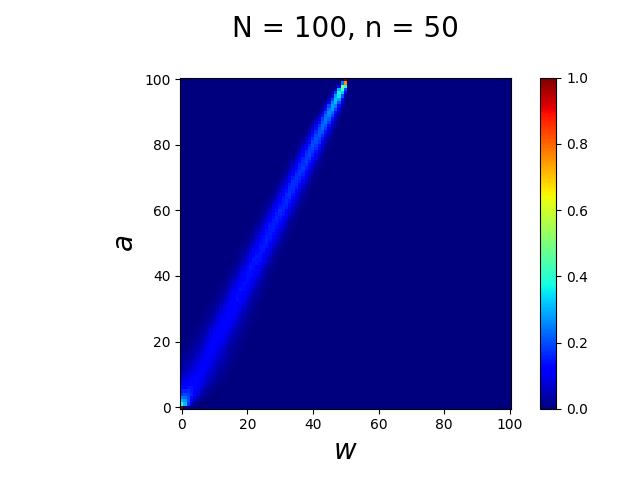} 
\end{center}
\caption{The exact winners P-function for the hybrid strategy, KS with $D=0$, made properly bounded by either the random (top) or sequential (bottom) escape route. Though almost identical, note that (non-statistical) differences for $a,w<20$, showing the two strategies are in fact different.}
\label{fig:2hybr}
\end{figure}

\begin{figure}[t!]
\begin{center}
\includegraphics[width=0.33\textwidth]{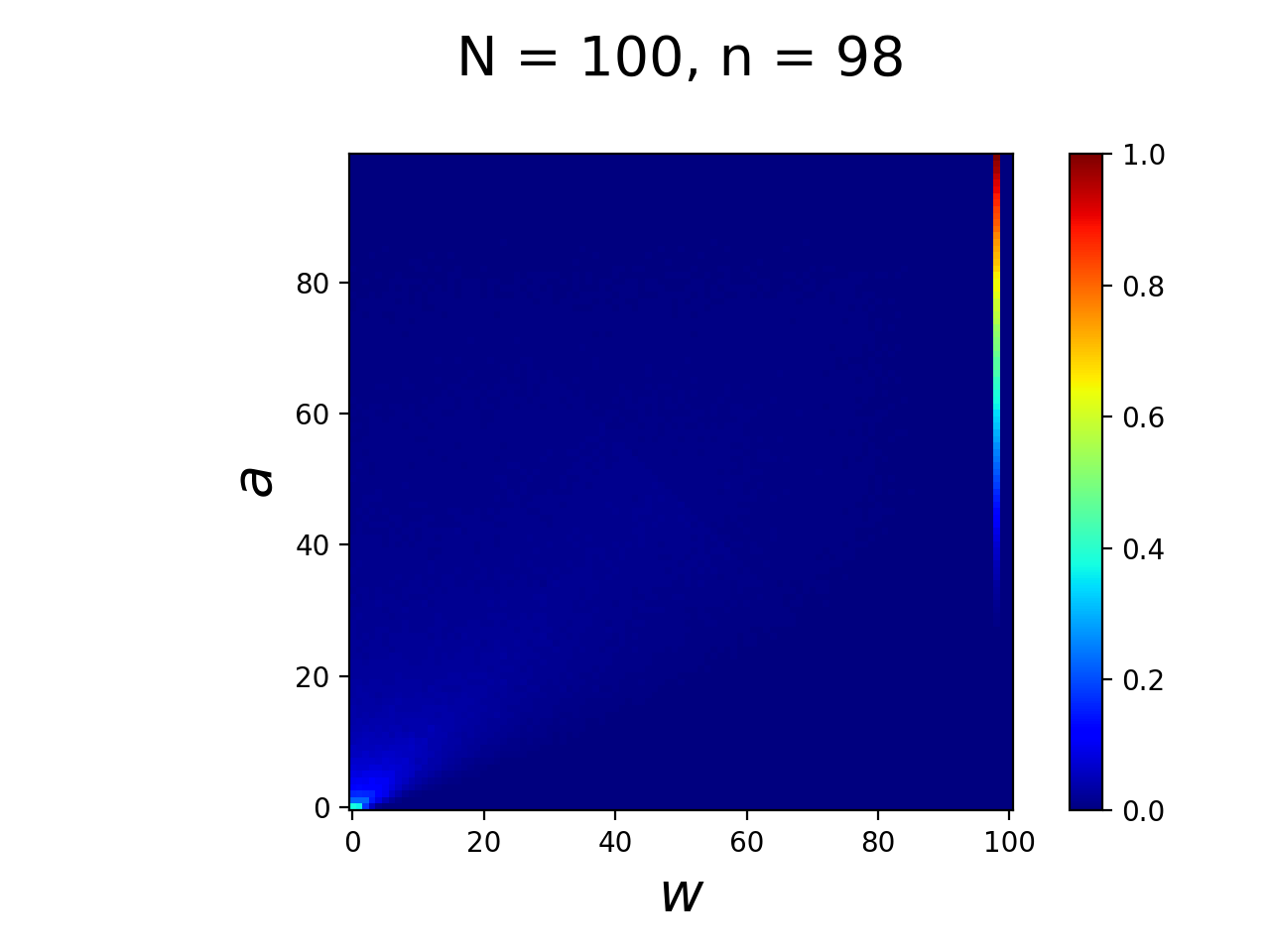} 
\includegraphics[width=0.33\textwidth]{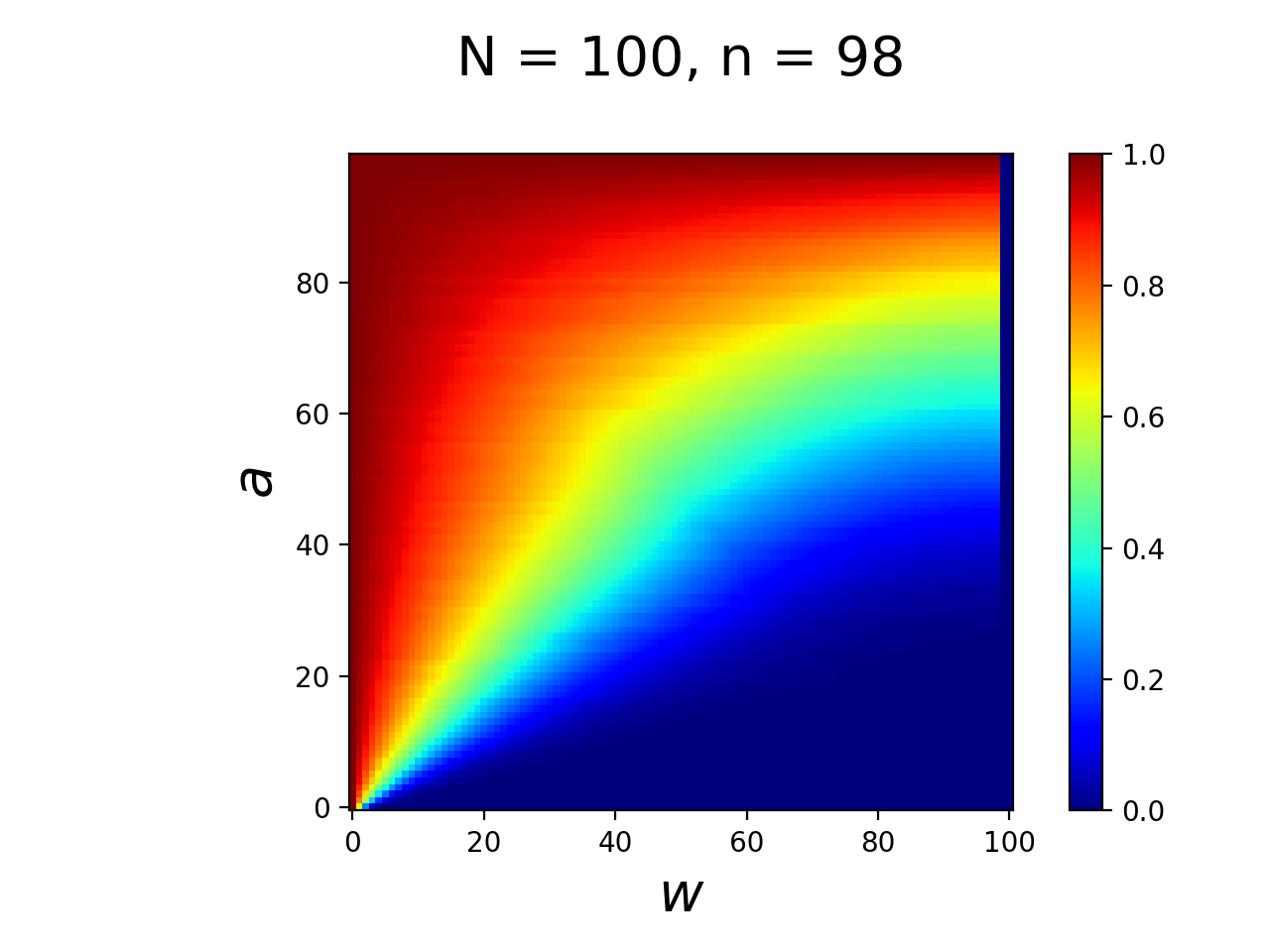} 
\end{center}
\caption{The ADI strategy, for $n=98$.}
\label{fig:adi98}
\begin{center}
\includegraphics[width=0.33\textwidth]{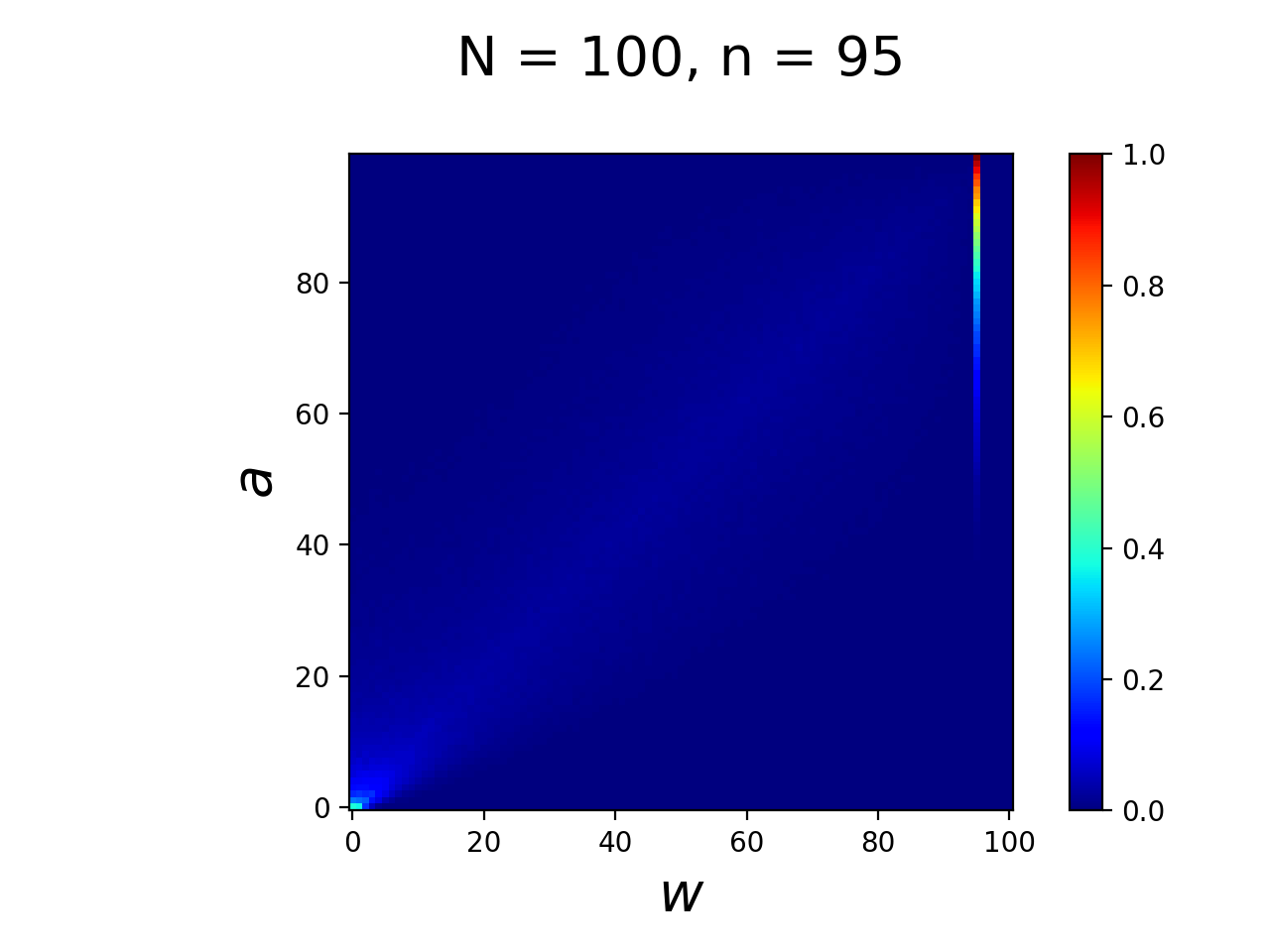} 
\includegraphics[width=0.33\textwidth]{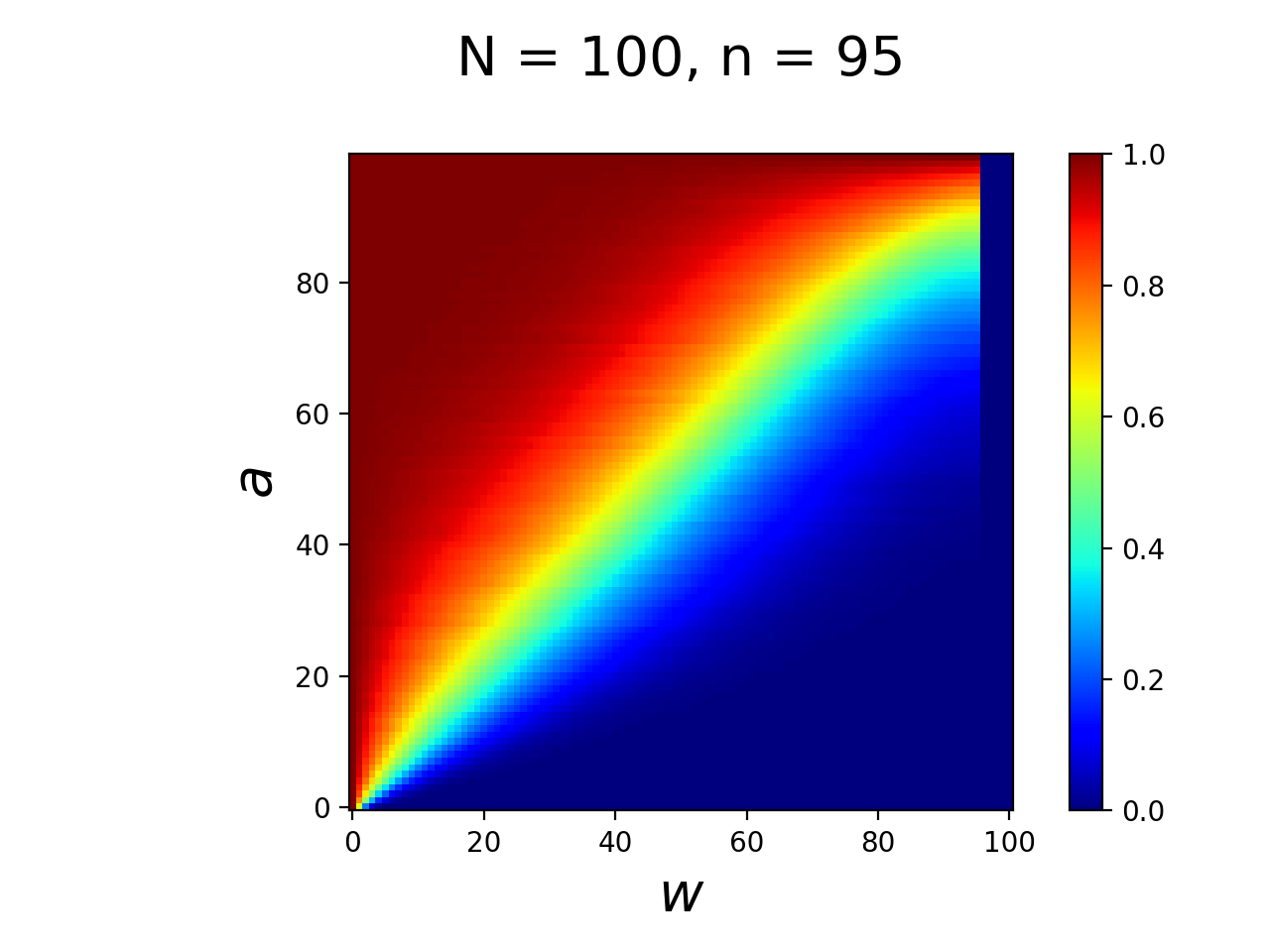} 
\end{center}
\caption{The ADI strategy, for $n=95$.}
\label{fig:adi}
\begin{center}
\includegraphics[width=0.33\textwidth]{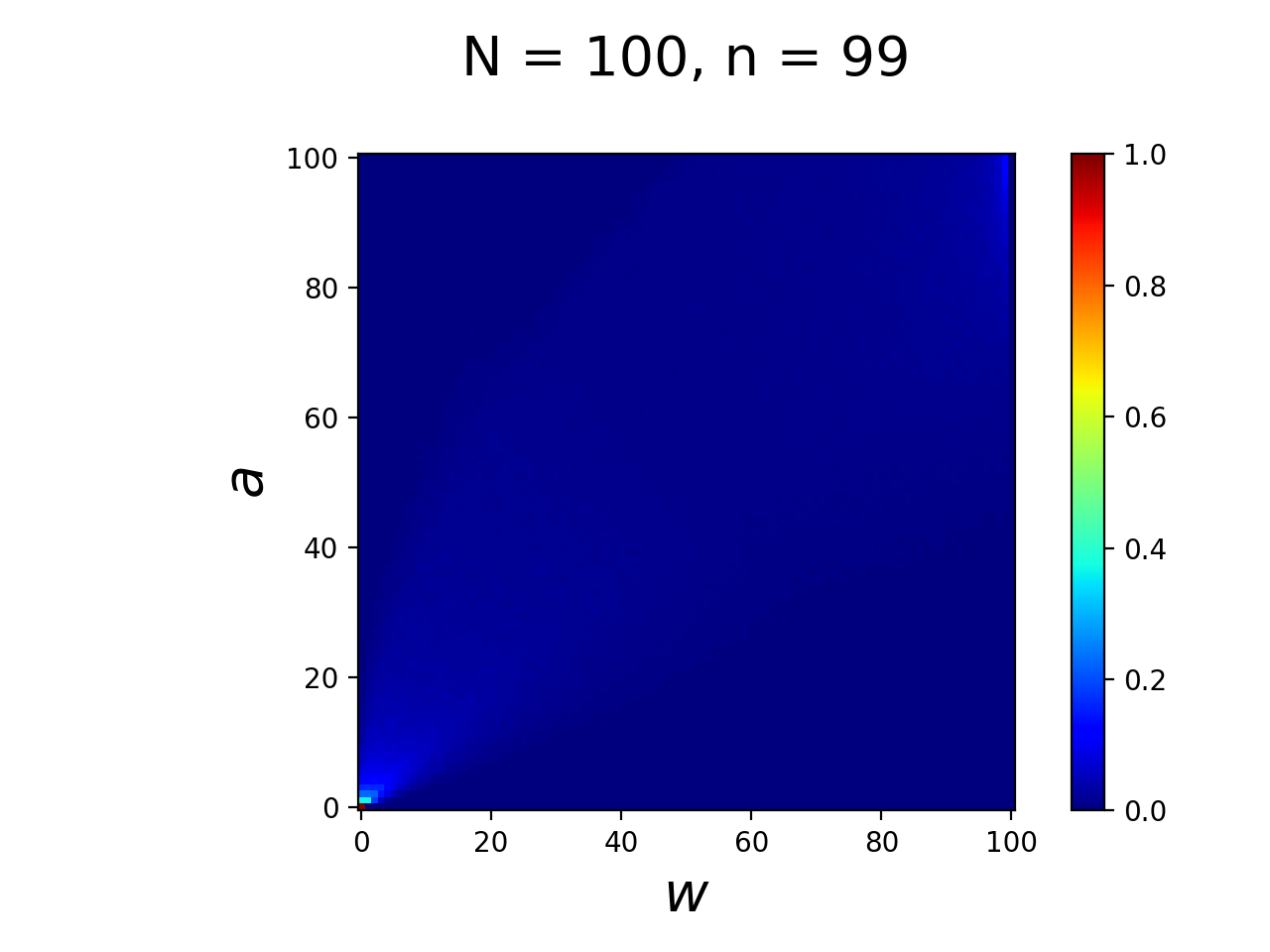} 
\includegraphics[width=0.33\textwidth]{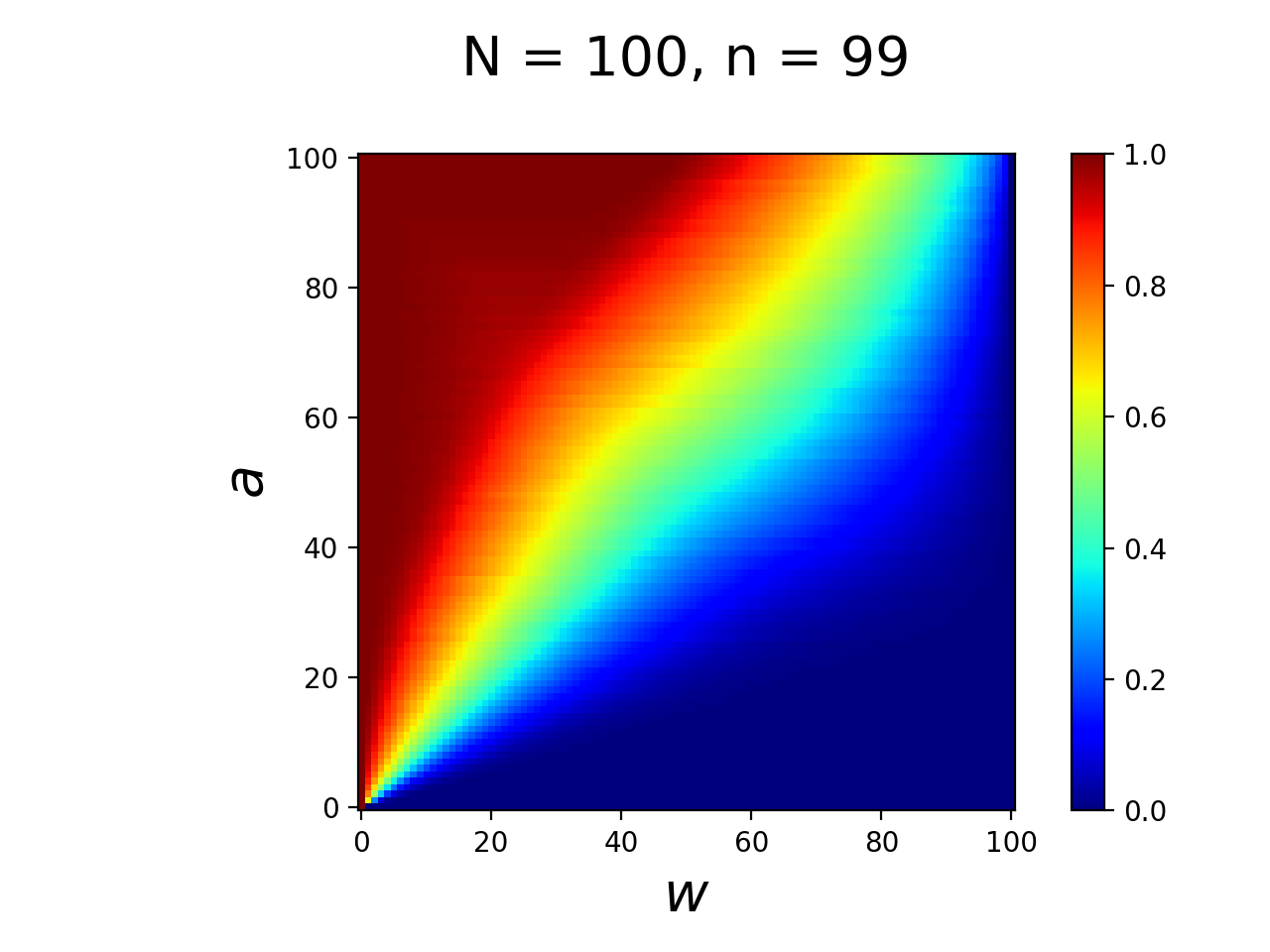} 
\end{center}
\caption{The Goyal-Saks strategy, the only example of a (not properly) bounded strategy presented in this paper.}
\label{fig:Goyal-Saks}
\end{figure}

\clearpage
\section{Error $\varepsilon_{ij}$}
 \begin{figure}[h]
\begin{center}
\includegraphics[width=0.45\textwidth]{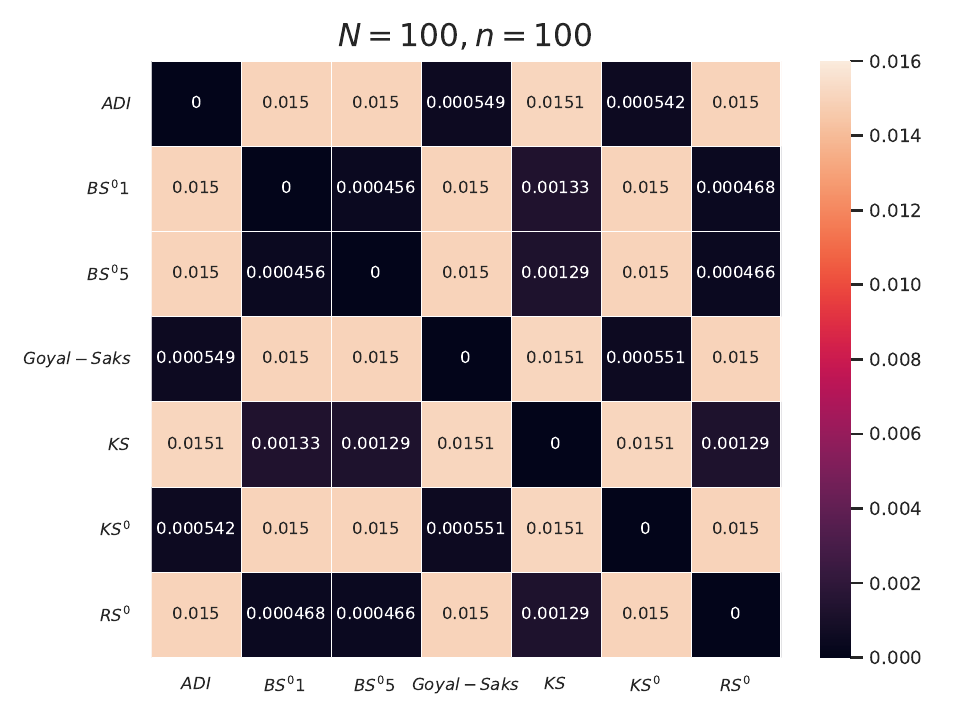} 
 \includegraphics[width=0.45\textwidth]{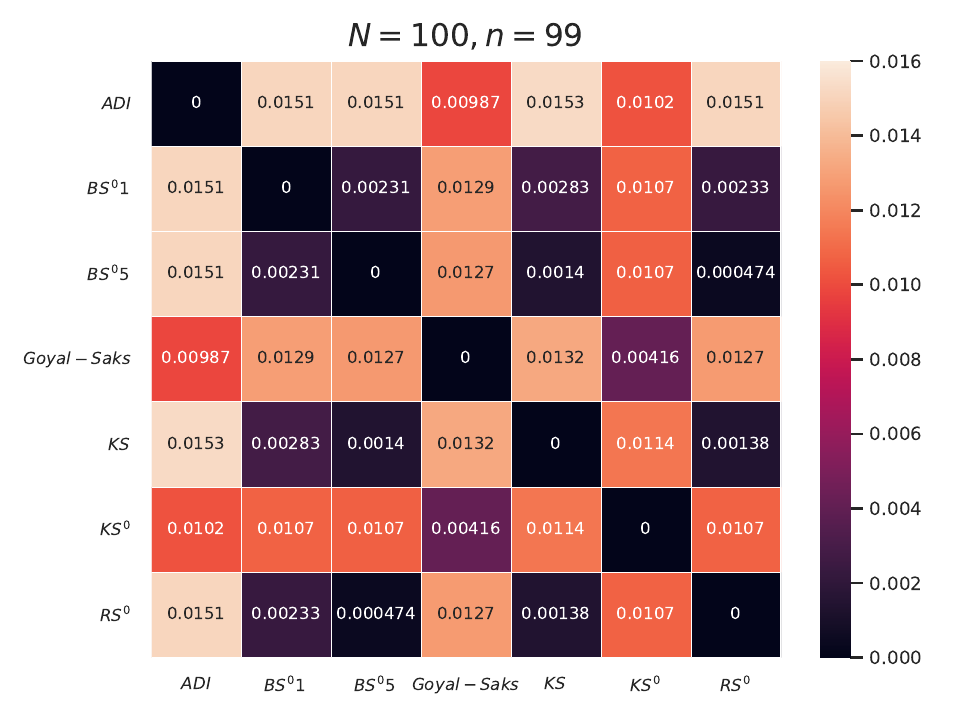}
\end{center} 
\begin{center}
\includegraphics[width=0.45\textwidth]{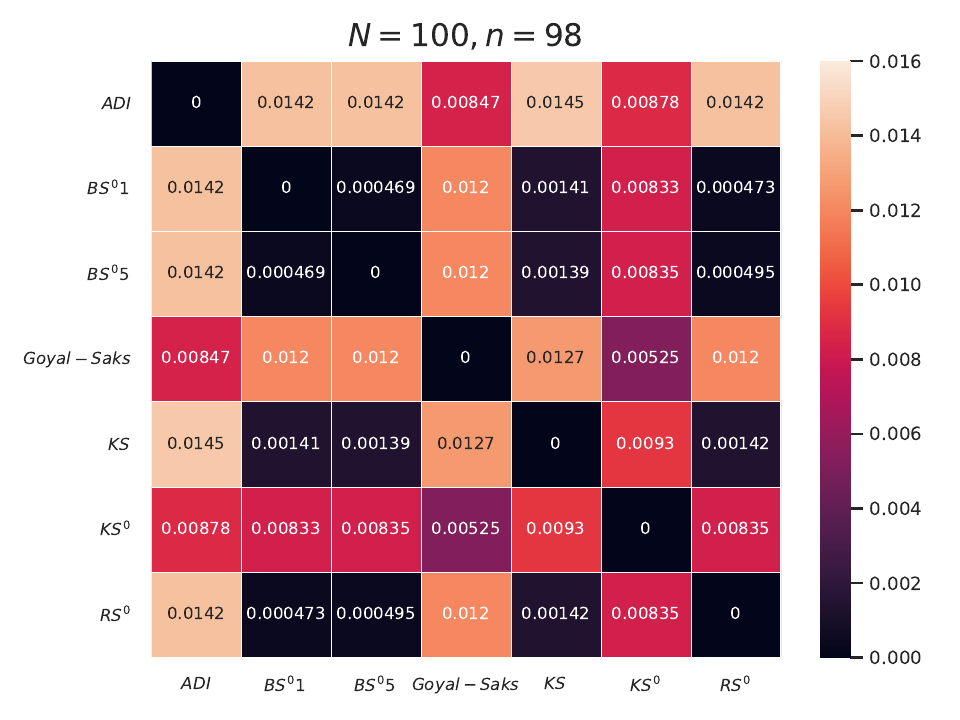} 
\includegraphics[width=0.45\textwidth]{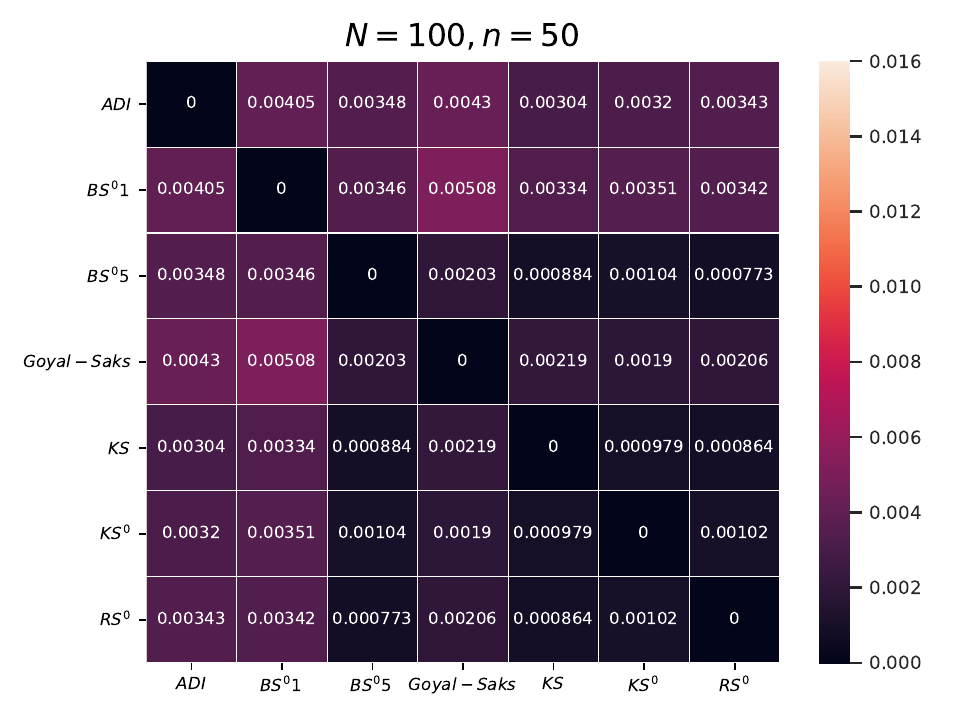}
\end{center}
\caption{Errors heatmap for various values of $n$. For each $n$, and for each pair of strategies, the absolute error is calculated using \eqref{err}.}
\label{fig:errors}
\end{figure}
\clearpage
%%%%%%%%%%%%%%%%%%%%%%%%%%%%%%%%%%%%%%%%
%%%%%%%%%%%%%%%%%%%%%%%%%%%%%%%%%%%%%%%%%%%%%%%%%%%%%%%
%%%%%%%%%%%%%%%%%%%%%%%%%%%%%%%%%%%%%%%%%%%%%%%%%%%%%%%%
\providecommand{\href}[2]{#2}
%%%%%%%%%%%%%%%%%%%%%%%%%%%%%%%%%%%%%%%%%%%%%%%%%%%%%%%%
%%%%%%%%%%%%%%%%%%%%%%%%%%%%%%%%%%%%%%%%%%%%%%%%%%%%%%%%

\end{document}